\newtheorem{theorem}{Theorem}[section]
\newtheorem{corollary}[theorem]{Corollary}
\newtheorem{lemma}[theorem]{Lemma}
\newtheorem{proposition}[theorem]{Proposition}
\newtheorem{definition}[theorem]{Definition}
\numberwithin{equation}{section}
\newtheorem{remark}[theorem]{Remark}
\newcommand{\RR}{\mathbb{R}}
\newcommand{\NN}{\mathbb{N}}
\newcommand{\QQ}{\mathbb{Q}}
\newcommand{\ZZ}{\mathbb{Z}}
\newcommand{\cS}{\mathcal{S}}
\newcommand{\cR}{\mathcal{R}}
\newcommand{\cL}{\mathcal{L}}
\newcommand{\cH}{\mathcal{H}}
\newcommand{\mB}{\mathscr{P}}
\newcommand{\sB}{\mathscr{B}}
\newcommand{\sV}{\mathscr{C}}
\newcommand{\SL}{\mathrm{SL}}
\newcommand{\ba}{\mathbf{a}}
\newcommand{\bp}{\mathbf{p}}
\newcommand{\bx}{\mathbf{x}}
\newcommand{\bz}{\mathbf{z}}
\newcommand{\bzero}{\mathbf{0}}
\newcommand{\bv}{\mathbf{v}}
\newcommand{\bw}{\mathbf{w}}
\newcommand{\diag}{\mathrm{diag}}
\newcommand{\dist}{\mathrm{dist}}
\newcommand{\Vol}{\mathrm{Vol}}
\newcommand{\Bad}{\mathbf{Bad}}
\newcommand{\rr}{\mathbf{r}}
\newcommand{\tpp}{\mathbf{\tilde{p}}}
\newcommand{\tP}{\tilde{P}}
\newcommand{\tp}{\tilde{p}}
\newcommand{\tq}{\tilde{q}}
\newcommand{\ta}{\tilde{a}}
\def\vol{\operatorname{vol}}
\begin{document}

\title[Weighted badly approximable vectors]{Weighted badly approximable vectors and games}
\date{January 10, 2017}

\author{Lifan Guan}
\address{Beijing International Center for Mathematical Research, Peking University, No. 5 Yiheyuan Road,
Beijing 100871, China.}
\email{guanlifan@pku.edu.cn}

\author{Jun Yu}
\address{Beijing International Center for Mathematical Research, Peking University, No. 5 Yiheyuan Road,
Beijing 100871, China.}
\email{junyu@math.pku.edu.cn}

\begin{abstract}
Let $d\ge 2$. We show that the set $\Bad(\rr)$ of $\rr$-badly
approximable vectors in $\mathbb{R}^{d}$ is  hyperplane absolute winning, hence is $1/2$-winning
for certain one-dimensional family of weights $\rr$.


\end{abstract}

\subjclass[2010]{11J20}

\keywords{Diophantine approximation, badly approximable vectors, $(\alpha,\beta)$-game, hyperplane absolute game.}

\maketitle

\tableofcontents

\section{Introduction}\label{S:introduction}

This paper is concerned with the study of weighted badly approximable vectors, which are natural
  generalization in high dimension of the classical badly approximable numbers. Its definition is as follows. For $d\in\mathbb{N}$,
 a tuple $\rr=(r_1,\ldots,r_d)$ belonging to the set $$\cR_d=\{\rr=(r_1,\ldots,r_d): r_i\ge0,
\sum_{i=1}^{d}r_i=1\},$$ and  $\epsilon>0$, set \begin{equation*}\label{Bad-c}\Bad_{\epsilon}(\rr)=\{(x_1,\dots,x_{d})
\in\mathbb{R}^{d}:\inf_{q\in\mathbb{N}}\max_{1\leq i\leq d}q^{r_i}\|qx_{i}\|\geq \epsilon\},\end{equation*}
where $\|\cdot\|$ means the distance of a real number to its nearest integer. Put $$\Bad(\rr)=\bigcup_{\epsilon>0}\Bad_{\epsilon}(\rr),$$
which is called the set of \emph{$\rr$-badly approximable vectors} in $\RR^{d}$. The tuple $\rr$ is called
a \emph{weight}. We also denote $\Bad(1/d,\ldots,1/d)$ simply by $\Bad_d$.

Studying the intersection of $\Bad(\rr)$ for different weights $\rr$ is a very appealing
subject which is undergoing rapid progress in recent years.
In \cite{BPV} Badziahin, Pollington and Velani proved a 30-year-old
conjecture of Schmidt \cite{Sc3} that
$$\Bad(\frac{1}{3},\frac{2}{3})\cap\Bad(\frac{2}{3},\frac{1}{3})\neq\emptyset.$$
Actually the main result of \cite{BPV} is much stronger. They proved that
\begin{theorem}[\cite{BPV}]\label{BPV-condition}
Let $\cS$ be a countable subset of $\cR_2$ satisfying
\begin{equation*}\label{E:BPV1}
\dist(\cS\setminus\partial\cR_2,\partial\cR_2)>0.
\end{equation*}
Then \begin{equation*}\label{E:BPV2}
\dim_H\bigcap_{\rr\in \cS}\Bad(\rr)=2,
\end{equation*}
where $\dim_H$ means the Hausdorff dimension.
\end{theorem}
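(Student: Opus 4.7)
The plan is to construct a single tree-like Cantor subset of $\bigcap_{\rr\in\cS}\Bad(\rr)$ of Hausdorff dimension arbitrarily close to $2$, by simultaneously avoiding the ``dangerous'' rationals associated to every weight $\rr\in\cS$. The quantitative hypothesis on $\cS$ enters as a uniform lower bound $r_{\min}>0$ on the coordinates of every $\rr\in\cS\setminus\partial\cR_2$; the at most two weights on $\partial\cR_2$, namely $(1,0)$ and $(0,1)$, can be absorbed separately by combining the classical one-dimensional theory of $\Bad_1$ with a product construction.

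Enumerate $\cS=\{\rr^{(1)},\rr^{(2)},\ldots\}$ and fix a large integer $R$ together with a small $\epsilon(\rr)>0$ for each weight. Membership $\bx\in\Bad_{\epsilon(\rr)}(\rr)$ is the requirement that $\bx$ avoid every ``dangerous strip'' $\{|x_i-p/q|<\epsilon(\rr)\,q^{-1-r_i}\}$ for $i\in\{1,2\}$ and $p/q\in\QQ$; I would organize these by level, placing into level $n$ the strips with $R^{n-1}\le q<R^n$. The tree is built so that each level-$n$ node is an axis-aligned rectangle of sides $R^{-nr_1}\times R^{-nr_2}$ for the currently active weight $\rr=(r_1,r_2)$, and so that no level-$n$ node meets a level-$n$ dangerous strip of any weight considered so far. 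The key combinatorial input is a counting estimate: inside each level-$n$ rectangle, out of its $R$-many level-$(n+1)$ children, only an absolute bounded number are destroyed by level-$(n+1)$ dangerous strips for a given weight $\rr$, provided $\epsilon(\rr)$ is chosen sufficiently small in terms of $R$ and $r_{\min}$.

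I would then interleave the weights $\rr^{(1)},\ldots,\rr^{(k(n))}$ at stage $n$, with $k(n)\to\infty$ slowly, so that at every stage the total number of children removed across all weights considered so far is bounded by a small fixed fraction of $R$, leaving a definite positive proportion alive. A weighted Cantor-set dimension estimate (of Pollington--Velani type, as used throughout \cite{BPV}) then shows that the resulting nested intersection is a nonempty compact set of Hausdorff dimension tending to $2$ as $R\to\infty$ with $\epsilon(\rr)$ tuned appropriately. Since this set is contained in $\bigcap_{\rr\in\cS}\Bad_{\epsilon(\rr)}(\rr)\subset\bigcap_{\rr\in\cS}\Bad(\rr)$, we deduce $\dim_H\bigcap_{\rr\in\cS}\Bad(\rr)=2$.

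The main obstacle is the bookkeeping of the interleaving scheme: one must synchronize the schedule $n\mapsto k(n)$ with the sequence of constants $\epsilon(\rr^{(k)})$ and with the counting lemma so that every weight of $\cS$ eventually enters the pruning process, while the surviving proportion of children at each level remains bounded below uniformly in $n$. It is precisely the uniform lower bound $r_{\min}>0$ on the coordinates that makes such synchronization possible: without it, the aspect ratios $R^{n(r_1-r_2)}$ of the rectangles would be uncontrolled as $\rr$ ranges over $\cS$, and the counting estimate would degenerate for weights approaching $\partial\cR_2$.
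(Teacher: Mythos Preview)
This theorem is not proved in the paper; it is quoted from \cite{BPV} as background, so there is no ``paper's own proof'' to compare against. What can be assessed is whether your sketch is a faithful outline of the actual Badziahin--Pollington--Velani argument.

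Your proposal captures the broad architecture --- a Cantor-type construction that prunes children hit by dangerous neighborhoods, together with an interleaving schedule for the countably many weights --- but it glosses over precisely the point where the real work lies. You write that each level-$n$ node is a rectangle of sides $R^{-nr_1}\times R^{-nr_2}$ ``for the currently active weight $\rr$''. This cannot work as stated: different weights give rectangles of different aspect ratios, and a tree whose nodes change shape from level to level according to whichever weight is ``active'' does not have a well-defined parent--child structure. In \cite{BPV} the construction is carried out relative to a \emph{single fixed} reference weight, and the substantial content of the paper is showing that the dangerous regions for \emph{other} weights --- whose natural shapes are mismatched with the reference rectangles --- can nonetheless be covered by a controlled number of reference-shaped boxes. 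This requires a two-colour classification of rationals (according to which coordinate dominates the approximation) and a delicate inductive argument; the naive strip-count you invoke is not sufficient, and indeed fails without that extra structure.

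So the gap is not merely bookkeeping: the counting estimate you assert (``only an absolute bounded number [of children] are destroyed'') is the heart of the theorem, and your sketch does not indicate how to obtain it when the weight $\rr$ differs from the one governing the tree geometry.
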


Afterwards,  Beresnevich (\cite{Beresnevich}) proved that

\begin{theorem}[\cite{Beresnevich}]
For any $d\ge2$, let $\cS$ be a countable subset of $\cR_d$ satisfying
\begin{equation}\label{E:Ber1}
\dist(\cS\setminus \partial\cR_d,\partial\cR_d)>0.
\end{equation}
Then
\begin{equation}\label{E:Ber2}
\dim_H\bigcap_{\rr\in \cS}\Bad(\rr)=d.
\end{equation}
\end{theorem}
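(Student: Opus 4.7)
The upper bound $\dim_H\bigcap_{\rr\in\cS}\Bad(\rr)\le d$ is immediate. For the lower bound, I would prove that every open ball $B\subset\RR^d$ meets $\bigcap_{\rr\in\cS}\Bad(\rr)$ in a set of Hausdorff dimension $d$, by constructing, for a suitable uniform $\epsilon>0$, a Cantor-type subset of $\bigcap_{\rr\in\cS}\Bad_{\epsilon}(\rr)\cap B$ of dimension arbitrarily close to $d$. The starting observation is that $x\notin\Bad_\epsilon(\rr)$ if and only if there exist $q\in\NN$ and $\bp\in\ZZ^d$ with $|x_i-p_i/q|<\epsilon q^{-1-r_i}$ for all $i$; hence the complement of $\Bad_\epsilon(\rr)$ is a union of axis-parallel \emph{resonant rectangles} $\Delta(\bp,q;\rr)$ with side lengths $2\epsilon q^{-1-r_i}$, and the task is to construct a rich set of points avoiding all of these simultaneously for every $\rr\in\cS$.

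Enumerate $\cS=\{\rr^{(k)}\}_{k\ge 1}$ and build a nested sequence $B\supset K_0\supset K_1\supset\cdots$, where each $K_n$ is a disjoint union of closed cubes of side $R^{-n}$ for a large integer $R$ to be chosen. In passing from $K_n$ to $K_{n+1}$ I would subdivide every cube of $K_n$ into $R^d$ sub-cubes of side $R^{-(n+1)}$ and discard any sub-cube meeting some $\Delta(\bp,q;\rr^{(k)})$ with $q\in[R^n,R^{n+1})$ and $k\le f(n)$, where $f(n)\to\infty$ slowly. The hypothesis $\delta:=\dist(\cS\setminus\partial\cR_d,\partial\cR_d)>0$ is used precisely to guarantee that every interior weight in $\cS$ satisfies $\min_i r_i\ge\delta$ and $\max_i r_i\le 1-(d-1)\delta$, uniformizing the geometry of the resonant rectangles across the family. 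A counting argument exploiting $\sum_i r_i=1$ should then bound the fraction of sub-cubes discarded at each stage by $1/2$, uniformly in $n$ and $\rr$, provided $R$ is taken large and $\epsilon$ small. The intersection $\bigcap_n K_n$ lies in $\bigcap_{\rr\in\cS}\Bad_\epsilon(\rr)\cap B$ and has Hausdorff dimension at least $d-C_d/\log R$, which tends to $d$ as $R\to\infty$.

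The main obstacle is the covering estimate at each stage. The naive total-measure bound gives only $O(\epsilon^d\log R)$ for the Lebesgue measure of all resonant rectangles at scale $q\in[R^n,R^{n+1})$ inside a cube of side $R^{-n}$ (a consequence of $\sum_i r_i=1$), but the strong anisotropy of the rectangles together with the density $\sim q^d$ of denominator-$q$ rationals causes cumulative boundary effects that can swamp the available sub-cubes when handled coarsely. I would overcome this by refining the subdivision step: the range $q\in[R^n,R^{n+1})$ is processed via an inner dyadic sub-scaling calibrated against the smallest side $\epsilon q^{-1-\max_i r_i}$, which is well-defined thanks to the uniform upper bound on $\max_i r_i$ supplied by the distance hypothesis. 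Boundary weights $\rr\in\cS\cap\partial\cR_d$ (those with some $r_i=0$) require separate care: after relabelling coordinates, the product inclusion $\Bad(\rr)\supset\Bad(\rr')\times\Bad_1$, combined with Marstrand's inequality $\dim_H(A\times B)\ge\dim_H A+\dim_H B$ and induction on $d$ (base case $\dim_H\Bad_1=1$, Jarn\'ik), shows that intersecting with such weights does not lower the dimension achieved by the interior construction.
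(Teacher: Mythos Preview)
The paper does not prove this theorem; it is quoted from \cite{Beresnevich} as background for the paper's own result (Theorem~\ref{T:main theorem}), and no argument for it is given anywhere in the text. So there is no ``paper's own proof'' to compare against.

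As for the proposal on its merits: the Cantor-set architecture you describe is indeed the correct framework, and is what underlies Beresnevich's argument, but your sketch leaves the decisive step unfilled. You correctly isolate the main obstacle as the covering estimate---bounding how many level-$(n{+}1)$ sub-cubes meet some $\Delta(\bp,q;\rr)$ with $q\in[R^n,R^{n+1})$---and you acknowledge that the naive measure bound fails because of anisotropy. But ``refining the subdivision step via an inner dyadic sub-scaling calibrated against the smallest side'' is a placeholder, not an argument. For $d\ge 3$ the crude counting genuinely does not close, and Beresnevich's proof requires a structural input of real strength: a simplex-type lemma forcing all dangerous rationals near a given cube to lie on a common hyperplane, together with an inductive scheme on the weights that propagates this constraint through the construction. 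Nothing in your outline supplies a substitute for this.

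Your treatment of boundary weights is also a gap rather than a reduction. The inclusion $\Bad(\rr)\supset\Bad(\rr')\times\RR$ for $\rr$ with $r_d=0$ is fine, but you cannot then invoke induction on $d$ and Marstrand separately: you are intersecting over \emph{all} $\rr\in\cS$ simultaneously, so the Cantor set built for the interior weights in $\RR^d$ must already be shown to sit inside each $\Bad(\rr')\times\RR$. That forces the boundary constraints to be woven into the same construction, not handled after the fact by a product inequality.
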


In the 1960s, Schmidt introduced the $(\alpha,\beta)$-game, which is played on metric spaces and whose winning sets,  the $\alpha$-winning sets ($\alpha\in(0,1)$), have the following remarkable properties:
\begin{itemize}
\item the intersection of countably many $\alpha$-winning sets is still $\alpha$-winning,
\item any $\alpha$-winning subset of a Riemannian manifold is of full Hausdorff dimension.
\end{itemize}
Schmidt also proved that
\begin{theorem}\label{schmidt}
For any $d\in \NN$, $\Bad_d$ is $1/2$-winning.
\end{theorem}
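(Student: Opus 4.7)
Fix $\alpha = 1/2$ and let $\beta \in (0,1)$ be arbitrary. The plan is to construct a strategy for Alice in Schmidt's $(\alpha,\beta)$-game on $\RR^d$ (equipped with the $L^\infty$ metric, so that ``balls'' are axis-aligned cubes) that forces the resulting limit point into $\Bad_\epsilon(1/d,\ldots,1/d)$ for some $\epsilon = \epsilon(\alpha,\beta,d) > 0$. This suffices since $\Bad_d = \bigcup_{\epsilon > 0}\Bad_\epsilon(1/d,\ldots,1/d)$. Write $\rho_k = (\alpha\beta)^{k-1}\rho_1$ for the half-side of Bob's $k$-th cube $B_k$.

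Reformulating the definition, $\bx \in \Bad_\epsilon(1/d,\ldots,1/d)$ iff $\bx$ avoids every ``forbidden cube'' $F_{q,\bp} = \{\by \in \RR^d : \|\by - \bp/q\|_\infty < \epsilon/q^{1+1/d}\}$ over $q \in \NN$ and $\bp \in \ZZ^d$. I would organise Alice's strategy around a geometric sequence of denominator thresholds $Q_k$, calibrated to $\rho_k$ so that the forbidden cubes associated with $q \in (Q_{k-1}, Q_k]$ have side length of order $\epsilon\rho_k$. Alice's task at round $k$ is to pick $A_k \subset B_k$ of half-side $\rho_k/2$ avoiding every $F_{q,\bp}$ with $q \in (Q_{k-1},Q_k]$ whose cube meets $B_k$. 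Older forbidden cubes (those with $q \leq Q_{k-1}$) are avoided for free by induction, via $A_k \subset A_{k-1}$. As $Q_k \to \infty$, the limit point $\bx^* = \bigcap_k B_k$ will then lie in $\Bad_\epsilon(1/d,\ldots,1/d)$.

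The two estimates needed are: (i) a bound $N = N(\alpha,\beta,d)$, uniform in $k$, on the number of new dangerous cubes meeting $B_k$. This should follow from the separation property that distinct rationals $\bp/q \neq \bp'/q'$ with $q,q' \in (Q_{k-1},Q_k]$ satisfy $\|\bp/q - \bp'/q'\|_\infty \geq 1/(qq') \geq 1/Q_k^2$, combined with a standard lattice-point counting argument for integer points in $qB_k$; and (ii) the geometric avoidance statement that a cube of side $\rho_k$ can always be placed inside one of side $2\rho_k$ so as to miss $N$ obstacles each of side $\lesssim \epsilon\rho_k$, provided $\epsilon$ is sufficiently small.

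I expect the main obstacle to be point (ii) in dimension $d \geq 2$: since $\alpha = 1/2$ gives Alice no slack in the centering sense, the avoidance cannot be proved by a naive volume bound and must use the $L^\infty$-cube structure of the obstacles. The plan is to argue coordinate by coordinate -- for each obstacle there is some coordinate axis along which Alice's cube can be slid to separate them, and a pigeonhole over the $d$ axes together with the small size of each obstacle produces a single placement of $A_k$ that separates from all $N$ obstacles simultaneously, once $\epsilon$ is chosen small compared to $1/(Nd)$. Balancing the constants $Q_k$, $N$, and $\epsilon$ so that (i) and (ii) both hold at every scale $k$ constitutes the technical heart of the argument.
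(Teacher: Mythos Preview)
The paper does not prove Theorem~\ref{schmidt}; it is quoted as Schmidt's classical result. Your outline, however, has genuine gaps in both steps (i) and (ii), and the missing ingredient is precisely the mechanism that drives the paper's own machinery (Proposition~\ref{P:main prop}, Corollary~\ref{P:main2}) in the weighted case: a \emph{simplex lemma} forcing the relevant rationals onto a common hyperplane.

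For (i), the separation $\|\bp/q-\bp'/q'\|_\infty\ge 1/Q_k^2$ is correct but far too weak to bound the count when $d\ge2$. Calibrating $Q_k\asymp\rho_k^{-d/(d+1)}$ as you do gives $\rho_k Q_k^{2}\asymp\rho_k^{(1-d)/(d+1)}\to\infty$, so a $1/Q_k^2$-separated set inside $B_k$ can have unbounded cardinality. Concretely, if the centre of $B_k$ lies on a rational line, that line alone carries $\asymp Q_k^{1/2}$ distinct rationals with denominator $\le Q_k$ inside $B_k$ when $d=2$. The ``lattice-point count in $qB_k$'' does not help: for each fixed $q$ there is at most one $\bp$, but there are $\asymp Q_k$ values of $q$, and these need not collapse to boundedly many distinct rationals.

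For (ii), with $\alpha=1/2$ Alice cannot dodge even a \emph{single} small cube in one round. If an obstacle of half-side $\delta>0$ is centred at Bob's centre $c_{B_k}$, then every admissible Alice-centre $c_A$ satisfies $\|c_A-c_{B_k}\|_\infty\le\rho_k/2<\rho_k/2+\delta$, so $A_k$ necessarily meets the obstacle. Your coordinate-wise pigeonhole presupposes that for each obstacle some axis offers an escape, which is simply false here.

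Schmidt's actual argument replaces both steps by one structural fact: any $d+1$ rationals $\bp_j/q_j$ with $q_j\le Q_k$ lying in $B_k$ have $\bigl|\det\bigl(\bp_j,\,q_j\bigr)_{0\le j\le d}\bigr|\ll Q_k^{d+1}\rho_k^{\,d}=O(1)$, hence with the right constants are affinely dependent. All dangerous rationals therefore lie on a single hyperplane $\cH$, and Alice's task becomes avoiding a thin slab $\cH^{(\delta)}$ --- a one-dimensional manoeuvre she can complete over a bounded number of rounds even at $\alpha=1/2$ (compare the proof of Proposition~\ref{P:HAW}). This hyperplane reduction is exactly what Corollary~\ref{P:main2} establishes in the weighted setting of the present paper.
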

In \cite{Kl}, Kleinbock raised a question that whether $\Bad(\rr)$ is a winning set for any $d\ge 2$ and any weight $\rr\in \cR_d$. In view of the properties of the winning sets listed above, the conclusion \eqref{E:Ber2} still holds without the technical condition \eqref{E:Ber1} if Kleinbock's question has a positive answer.
In \cite{An2},  An answered Kleinbock's question positively in case $d=2$
 by proving the following theorem.
\begin{theorem}[\cite{An2}]
For any $\rr\in\cR_2$, $\Bad(\rr)$ is $(24\sqrt{2})^{-1}$-winning.
\end{theorem}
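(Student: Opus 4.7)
The plan is to design an explicit winning strategy for Alice in Schmidt's $(\alpha,\beta)$-game with $\alpha=(24\sqrt{2})^{-1}$ against arbitrary $\beta\in(0,1)$. After ordering the weights so that $r_1\leq r_2$, and fixing $\epsilon>0$ small in terms of Bob's initial ball $B_0$, I associate to each rational vector $\bp/q\in\QQ^2$ the \emph{resonant rectangle}
\begin{equation*}
R_{\bp,q}=\{\bx\in\RR^2: q^{r_i}|qx_i-p_i|\leq\epsilon,\ i=1,2\},
\end{equation*}
of side lengths $2\epsilon q^{-1-r_1}$ in the $x_1$-direction and $2\epsilon q^{-1-r_2}$ in the $x_2$-direction. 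Any point of $\RR^2$ outside every $R_{\bp,q}$ lies in $\Bad_\epsilon(\rr)\subseteq\Bad(\rr)$, so Alice need only steer the nested intersection clear of their union.

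Next I would localize the rationals scale by scale. After Bob's $n$-th move, $B_n$ has radius $\rho_n=(\alpha\beta)^n\rho_0$. Declare $q$ \emph{critical at round $n$} when the longer side of $R_{\bp,q}$ is comparable to $\rho_n$, that is, $q\in\mathcal{Q}_n:=[Q_n,(\alpha\beta)^{-1/(1+r_1)}Q_n)$ with $Q_n\asymp(\epsilon/\rho_n)^{1/(1+r_1)}$. These intervals tile $\NN$ beyond an initial threshold, so each rational is handled at exactly one round; rectangles with $q\notin\mathcal{Q}_n$ either already miss $B_n$ or are small enough to defer to a later round.

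At her $n$-th move Alice must avoid all critical rectangles that meet $B_n$. Because $q\in\mathcal{Q}_n$, each such rectangle is very thin in the $x_2$-direction: its $x_2$-width $2\epsilon q^{-1-r_2}$ is at most a small multiple of $\rho_n$ once $\epsilon$ is chosen small. A Minkowski/geometry-of-numbers argument applied to the pair of inequalities $q^{r_i}\|qx_i-p_i\|<\epsilon$ bounds by an absolute constant $N$ the number of distinct rationals $\bp/q$ with $q\in\mathcal{Q}_n$ whose resonant rectangle meets $B_n$. Alice then selects $A_n\subseteq B_n$ of radius $\alpha\rho_n$ whose $x_2$-projection lies in a gap between the $N$ short $x_2$-projections of these rectangles; the value $\alpha=(24\sqrt{2})^{-1}$ is exactly what guarantees such a gap for every $\beta\in(0,1)$. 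Iterating yields a unique point in $\bigcap_n B_n\subseteq\Bad_\epsilon(\rr)$, completing the proof.

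I expect the principal obstacle to be the counting in the third step: bounding $N$ uniformly in $\rr$ demands exploiting both Diophantine inequalities simultaneously, since two critical rectangles may share essentially the same $x_2$-projection while being widely separated in $x_1$. Tracking the interplay between the shape of the disk $B_n$, the rectangle widths, and the Minkowski separation is what pins down the explicit constant $(24\sqrt{2})^{-1}$.
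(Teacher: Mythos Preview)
First, note that the present paper does not prove this theorem at all: it is quoted from \cite{An2} as background, and the paper's own contribution is Theorem~\ref{T:main theorem} for $\rr\in\cR'_d$, proved via the hyperplane potential game. So there is no ``paper's proof'' to compare against here; what one can compare to is An's argument in \cite{An2}, whose structure the present paper generalizes in Sections~\ref{S:line}--\ref{S:main}.

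Against that benchmark, your sketch has a genuine gap in the third step. The claim that a Minkowski argument bounds by an absolute constant $N$ the number of rationals $\bp/q$ with $q\in\mathcal Q_n$ whose rectangle meets $B_n$ is false in general: for $q\asymp Q_n=(\epsilon/\rho_n)^{1/(1+r_1)}$ there can be on the order of $Q_n$ such rationals, all lying on a common rational line but with distinct $x_2$-projections that may tile an interval of length comparable to $\rho_n$. No choice of $\alpha$ lets Alice dodge an unbounded number of thin strips in a single move. What the geometry-of-numbers argument actually yields (the ``simplex lemma'') is \emph{collinearity}, not a count, and this is exactly the phenomenon you flag in your final paragraph as the ``principal obstacle''. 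It is not an obstacle to be overcome within your framework; it forces a different strategy.

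An's actual approach, mirrored in this paper, is structurally different in two ways. First, rationals are classified not by $q$ alone but by a height $H(P)=q\,\xi_P$, where $\xi_P$ comes from a short dual vector (the attached linear form $F_P$ of Subsection~\ref{SS:def-fp}); the resulting partition $\mB_{n,k}$ is doubly indexed, separating rationals that become dangerous at the same ball scale but for different reasons. Second, the avoidance move is not ``pick a gap among $N$ strips'' but ``avoid a single thin neighborhood of the common line $\cH_P$'' (Proposition~\ref{P:main prop} and Corollary~\ref{P:main2}); the specific constant $(24\sqrt 2)^{-1}$ in \cite{An2} emerges from the geometry of avoiding one line-neighborhood per class, summed over $k$, not from a finite count of rectangles. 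Your single-scale localization and gap-finding strategy would need to be replaced by this line-avoidance mechanism to go through.
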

When $d>2$, nothing in this direction is known except Schmidt's classical result, namely Theorem \ref{schmidt}.

In this paper, we use a variant of the $(\alpha,\beta)$-game, namely the hyperplane absolute game introduced in \cite{BFKRW} to study $\Bad(\rr)$ for certain weights $\rr$ in high dimension. Any hyperplane winning set is $1/2$-winning (see Proposition \ref{P:HAW}). More details about the $(\alpha,\beta)$-game, the hyperplane
absolute game and their winning sets are given in Section \ref{S:game}. Recently, it was shown in \cite{BFKRW} that for any $d\in \NN$, $\Bad_d$ is a hyperplane absolute winning set. It was also shown  in \cite{NS} that for any $\rr\in\cR_2$, $\Bad(\rr)$ is a hyperplane absolute winning set.


For  $d\geq 2$, consider a subset of $\cR_d$ defined as follows
\begin{equation}\label{Rprime-d}\cR'_d:=\{\rr=(r_1,\ldots,r_d)\in\cR_d: \#\{i:r_i=\max_{1\le j\le d}r_j \}\ge d-1 \}.
\end{equation}
We prove the following theorem, which represents the first progress towards Kleinbock's question in high dimension.
\begin{theorem}\label{T:main theorem}
For any $\rr\in\cR'_d$,  $\Bad(\rr)$ is a hyperplane absolute winning set.
\end{theorem}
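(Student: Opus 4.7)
The plan is to exhibit an explicit winning strategy for Alice in the hyperplane absolute game with target $\Bad_{\epsilon}(\rr)\subseteq \Bad(\rr)$, for $\epsilon=\epsilon(\rr,\beta)>0$ sufficiently small relative to Bob's game parameter $\beta\in(0,1/3)$. After permuting coordinates one may assume $r_1=\cdots=r_{d-1}=a$ and $r_d=b$ with $a\ge b\ge 0$ and $(d-1)a+b=1$; as the case $d=2$ is handled in \cite{NS}, we may assume $d\ge 3$. A point $\bx$ fails to lie in $\Bad_\epsilon(\rr)$ iff it lies in a resonant box $R(q,\bp)=\prod_{i=1}^d[p_i/q-\epsilon q^{-1-r_i},\,p_i/q+\epsilon q^{-1-r_i}]$, so Alice wins iff $\bigcap_n B_n$ avoids every such box.

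First I would group denominators into dyadic scales $I_k=[K^k,K^{k+1})$ for a large constant $K=K(\beta)$, and slave the game clock to these scales by choosing $M=M(\beta,K)$ so that after $M$ Bob-moves the radius contracts by more than a factor of $K^{1+a}$. Each block of $M$ rounds is treated as one effective move, on the first round of which Alice acts decisively and on the remaining rounds of which she plays harmless hyperplanes disjoint from $B_n$. The $k$-th active move must simultaneously neutralize every dangerous rational $\bp/q$ with $q\in I_k$ whose box currently intersects the active ball.

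The technical core is a simplex-type lemma tailored to weights in $\cR'_d$: there is $\epsilon_0=\epsilon_0(d)>0$ such that for $\epsilon<\epsilon_0$, any $d+1$ dangerous rationals $\bp^{(j)}/q^{(j)}$ with $q^{(j)}\in I_k$ whose resonant boxes meet a common ball of radius $\asymp \epsilon K^{-k(1+a)}$ lie on a single affine hyperplane. The proof is the familiar clearing-denominators estimate on the $d\times d$ determinant of differences of coordinates: the resulting integer has absolute value $\le C_d\,\epsilon^d\prod_{i=1}^d K^{-k r_i}\prod_{j=0}^d q^{(j)}\ll 1$, hence vanishes. The hypothesis $\rr\in\cR'_d$ is what holds the row products at the right scale: the $d-1$ short sides of the boxes are all equal to $\epsilon K^{-k(1+a)}$, comparable to the ball radius, while the single long side in coordinate $d$ is bounded using the ball diameter in that direction rather than the box width. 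Alice then declares $\cH_k$ to be the affine hyperplane through the dangerous rationals (choosing any admissible hyperplane if fewer than $d$ points are present) and deletes the $\beta\rho_n$-tube about it. A direct aspect-ratio check, again using $b\le a$, shows that each dangerous box is either absorbed by this tube or has its uncovered part lying outside every ball Bob may still legally play, so all boxes with $q\in I_k$ are permanently eliminated. Iterating over $k\ge k_0$, with $B_0$ chosen small enough that no box with $q<K^{k_0}$ meets $B_0$, places $\bigcap_n B_n$ in $\Bad_\epsilon(\rr)$.

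The hardest step will be this hyperplane-absorption claim. In the truly anisotropic regime $b<a$ a naive hyperplane move typically leaves behind a thin sliver of each resonant box stretching in the wide $d$-th direction, and for a generic weight one would need several independent hyperplanes to remove all such slivers, which the HAW rules forbid. The structural content of $\rr\in\cR'_d$ is precisely that all long axes of the dangerous boxes are parallel, so a single hyperplane transverse to the common wide direction suffices. Pinning down the aspect-ratio calculation, handling the degeneracies in the simplex lemma (fewer than $d+1$ dangerous points, or rationals accidentally collinear in a lower-dimensional subspace), and tuning $(\epsilon,K,M)$ consistently with an arbitrary $\beta\in(0,1/3)$ is where the technical effort will be concentrated.
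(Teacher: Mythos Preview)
Your simplex lemma is correct, but the absorption step that follows it has a genuine gap. Once the dangerous rationals with $q\in I_k$ are known to lie on a common affine hyperplane $\cH_k$, you have \emph{no control whatsoever} over the direction of the normal to $\cH_k$. In particular nothing prevents $\cH_k=\{x_d=c\}$ (take rationals of the form $(\bp'/q,0)$ with $q$ prime in $I_k$). In that case the distance from a point of the resonant box $R(q,\bp)$ to $\cH_k$ can be as large as $\epsilon q^{-1-b}\asymp \epsilon K^{-k(1+b)}$, whereas the tube you are allowed to delete has width $\beta\rho_n\asymp \epsilon K^{-k(1+a)}$; since $a>b$ the ratio blows up with $k$, and Bob can legally place his next ball inside the uncovered sliver of the box. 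Your remark that ``a single hyperplane transverse to the common wide direction suffices'' identifies exactly the desideratum, but the affine span of the rationals need not be transverse, and if you switch to some other hyperplane it will not pass through the rationals and hence will not cover the boxes either. Note also that your determinant computation uses only $\sum r_i=1$, not the hypothesis $\rr\in\cR'_d$, which is a sign that the real use of $\cR'_d$ is still missing from the argument.

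The paper avoids this obstruction by \emph{not} using the affine span. To each rational $P=\bp/q$ it attaches, via Minkowski's theorem on the dual lattice $\Lambda_P^*$, a hyperplane $\cH_P=\{\ba_P\cdot\bx+C_P=0\}$ whose normal satisfies $|a_i|\le q^{r_i}$. This forces the normal to be short in the wide direction, so the projection of a box onto it is bounded by $dc/H(P)$ with $H(P)=q\max_i|a_i|$; hence the relevant decomposition is by $H(P)$ and $q(P)$ jointly, not by $q$ alone. Showing that the dangerous points actually lie on a single $\cH_{\tilde P}$ is then the hard part (Proposition~\ref{P:main prop}); this is where $\rr\in\cR'_d$ enters, through the auxiliary line $\cL_P$ of Lemma~\ref{L:v-existence}, and where the notion of maximal points is needed to rule out the case $\cL_{P_1}\cap\cH_{P_2}\neq\emptyset$. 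Finally, the paper works in the hyperplane \emph{potential} game, which lets Alice delete a countable family of hyperplane tubes at each round (one for each value of the secondary parameter $k$), rather than the single hyperplane your strategy allows.
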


An immediate corollary is as follows.
\begin{corollary}
Let $\cS$ be a countable subset of $\cR'_d$. Then
$$\dim_H\bigcap_{\rr\in \cS}\Bad(\rr)=d.$$
\end{corollary}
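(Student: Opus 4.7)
The plan is to deduce the corollary as a direct consequence of Theorem \ref{T:main theorem} combined with the three standard properties of winning sets listed in the excerpt, together with Proposition \ref{P:HAW} converting hyperplane absolute winning (HAW) to $1/2$-winning. No new Diophantine input is needed beyond Theorem \ref{T:main theorem}; the argument is entirely abstract game theory.

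First, since $\cS \subset \cR'_d$, Theorem \ref{T:main theorem} applies to every $\rr \in \cS$ individually, yielding that each set $\Bad(\rr)$ is HAW on $\RR^d$. Next, I would invoke the fact (referenced via Proposition \ref{P:HAW}) that every HAW set is $1/2$-winning in the sense of Schmidt. One can either use the fact that HAW sets are themselves closed under countable intersections and then convert at the end, or convert first and then use that the intersection of countably many $\alpha$-winning sets is $\alpha$-winning (the first bullet in the listed properties). Either way, because $\cS$ is countable, the set
\begin{equation*}
\Bad(\cS) := \bigcap_{\rr \in \cS} \Bad(\rr)
\end{equation*}
is a $1/2$-winning subset of $\RR^d$.

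Now I would apply the second bullet from the excerpt: any $\alpha$-winning subset of a Riemannian manifold has full Hausdorff dimension. Since $\RR^d$ is a Riemannian manifold of dimension $d$, this gives
\begin{equation*}
\dim_H \Bad(\cS) \ge d.
\end{equation*}
The reverse inequality $\dim_H \Bad(\cS) \le d$ is trivial since $\Bad(\cS) \subseteq \RR^d$. Combining the two bounds yields the claimed equality.

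There is essentially no obstacle here; the content of the corollary lies entirely in Theorem \ref{T:main theorem}. The only subtlety worth a line of commentary is the need for $\cS$ to be countable: this is exactly what allows the countable-intersection property of (hyperplane absolute) winning sets to be applied and is precisely the improvement over Beresnevich's result \eqref{E:Ber2}, whose hypothesis \eqref{E:Ber1} is now dropped.
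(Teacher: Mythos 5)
Your argument is correct and is exactly the route the paper intends: each $\Bad(\rr)$ is HAW by Theorem \ref{T:main theorem}, countable intersections of HAW (or of $\alpha$-winning) sets remain winning, and winning sets in $\RR^d$ are thick, giving $\dim_H \ge d$ with the upper bound trivial. The paper states the corollary as immediate without proof, and your write-up supplies precisely that standard deduction.
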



\noindent {\it Relation with homogeneous dynamics.}
Based on works of Dani \cite{Da1} and Kleinbock \cite{Kl}, it is now well known that weighted badly approximable
vectors are closely related to bounded orbits in homogeneous dynamics. Precisely, set $G=\SL_{d+1}(\RR)$
and $\Gamma=\SL_{d+1}(\ZZ)$. For $\rr\in \cR_d$, let
\begin{equation*}
\text{$F_{\rr}^+$ be the semigroup $\{\diag(e^{r_1t},\ldots,e^{r_dt},e^{-t}):t>0\}$};
\end{equation*}
and for $\bx\in\RR^{d}$, set
\begin{equation*}
u_{\bx}=\begin{pmatrix}I_{d} & \bx \\ 0 & 1 \end{pmatrix}\in G.
\end{equation*}
Then we have the following Dani-Kleinbock correspondence \cite[Theorem 2.5]{Kl}.

\begin{proposition}\label{P:approximation-orbit}
For $\bx\in\mathbb{R}^{d}$, $\bx\in\Bad(\rr)$ if and only if the orbit
$F_{\rr}^+ u_{\bx}\Gamma$ is bounded in $G/\Gamma$.
\end{proposition}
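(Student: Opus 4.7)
The plan is to translate boundedness of the orbit $F_{\rr}^+ u_{\bx}\Gamma$ in $G/\Gamma$ into a shortest-vector statement via Mahler's compactness criterion, and then to match that statement against the definition of $\Bad(\rr)$ through a one-variable convex optimization. Since $G/\Gamma$ parametrizes unimodular lattices in $\RR^{d+1}$, Mahler's criterion says the orbit is bounded if and only if there exists $\epsilon>0$ such that $\|g_t u_{\bx}(p_1,\ldots,p_d,q)^T\|_\infty\ge\epsilon$ for every $t>0$ and every nonzero $(p_1,\ldots,p_d,q)\in\ZZ^{d+1}$, where $g_t:=\diag(e^{r_1 t},\ldots,e^{r_d t},e^{-t})$.

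Expanding the coordinates gives $e^{r_i t}(p_i+qx_i)$ for $1\le i\le d$ together with $e^{-t}q$. The subcase $q=0$ is immediate: for any nonzero $(p_1,\ldots,p_d)$ one has $\max_i e^{r_i t}|p_i|\ge 1$. For $q\ne 0$ I may assume $q>0$ by symmetry, and the minimization over the $p_i$ is achieved by taking each $p_i$ to be the integer closest to $-qx_i$, giving $|p_i+qx_i|=\|qx_i\|$. The boundedness condition is therefore equivalent to $\inf_{q\in\NN,\,t>0}\psi_q(t)>0$, where
\begin{equation*}
\psi_q(t):=\max\Big(\max_{1\le i\le d}e^{r_i t}\|qx_i\|,\; e^{-t}q\Big).
\end{equation*}

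For one direction, assume $\bx\in\Bad_\epsilon(\rr)$ with $\epsilon\le 1$. The weighted AM--GM inequality $\max(a,b)\ge a^{\alpha}b^{1-\alpha}$ with the $t$-cancelling exponent $\alpha=1/(1+r_i)$ yields the $t$-free bound $\max(e^{r_i t}\|qx_i\|,e^{-t}q)\ge(q^{r_i}\|qx_i\|)^{1/(1+r_i)}$ for each $i$; picking for each $q$ the index $i_0$ with $q^{r_{i_0}}\|qx_{i_0}\|\ge\epsilon$ then gives $\psi_q(t)\ge\epsilon^{1/(1+r_{i_0})}\ge\epsilon$ uniformly, so the orbit is bounded. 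For the converse, suppose the orbit is bounded with constant $\epsilon'$. Observe that $\log\psi_q(t)$ is piecewise linear and convex in $t$; its decreasing branch $\log q-t$ strictly dominates every increasing branch $\log\|qx_i\|+r_i t$ at $t=0$ (because $\|qx_i\|\le 1/2<q$), so the minimum is attained at a unique interior $t^*>0$ satisfying $\log q-t^*=\log\|qx_{i^*}\|+r_{i^*}t^*$ for some index $i^*$. Elementary algebra gives $\psi_q(t^*)=(q^{r_{i^*}}\|qx_{i^*}\|)^{1/(1+r_{i^*})}$, so boundedness forces
$\max_i q^{r_i}\|qx_i\|\ge q^{r_{i^*}}\|qx_{i^*}\|\ge(\epsilon')^{1+r_{i^*}}\ge(\epsilon')^2$,
showing $\bx\in\Bad_{(\epsilon')^2}(\rr)$. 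The degenerate subcase in which $\|qx_i\|=0$ for every $i$ is ruled out by boundedness, since then $\psi_q(t)=e^{-t}q\to 0$; the only substantive ingredient in the whole argument is this one-variable convex optimization in $t$.
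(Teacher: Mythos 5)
Your argument is correct. Note that the paper does not prove Proposition \ref{P:approximation-orbit} at all: it quotes it as the Dani--Kleinbock correspondence, citing \cite[Theorem 2.5]{Kl}. What you have written is essentially a self-contained reconstruction of that standard argument: Mahler's compactness criterion converts boundedness of $\{g_tu_{\bx}\ZZ^{d+1}\}$ into a uniform lower bound on $\psi_q(t)=\max\bigl(\max_i e^{r_it}\|qx_i\|,\,e^{-t}q\bigr)$, the $q=0$ and $q<0$ cases are dismissed correctly, the interpolation $\max(a,b)\ge a^{1/(1+r_i)}b^{r_i/(1+r_i)}$ eliminates $t$ in one direction, and the piecewise-linear convex minimization in $t$ (valid also when some $r_i=0$, where the corresponding branch is constant, and with the all-$\|qx_i\|=0$ degeneracy handled) gives $\min_t\psi_q(t)=(q^{r_{i^*}}\|qx_{i^*}\|)^{1/(1+r_{i^*})}$ in the other. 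The only blemish is the final inequality $(\epsilon')^{1+r_{i^*}}\ge(\epsilon')^2$, which requires $\epsilon'\le 1$; this is harmless since one may shrink $\epsilon'$ (or invoke Minkowski's theorem, which forces the Mahler constant of a unimodular lattice to be at most $1$), or simply conclude $\bx\in\Bad_{\min\{\epsilon',(\epsilon')^2\}}(\rr)$. So the proposal is a valid proof of the cited proposition, obtained by the same route as the original Dani--Kleinbock argument rather than by any method internal to this paper.
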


Put \begin{equation*}
\text{$E(F_{\rr}^+)=\{x\in G/\Gamma : F_{\rr}^+x$ is bounded in $G/\Gamma\}$.}
\end{equation*}

As noted at the end of \cite{AGK}, using the
methods developed in \cite{AGK} one can gain information about the subset $E(F_{\rr}^+)\subset G/\Gamma$ from
properties of $\Bad(\rr)\subset\RR^{d}$. Hence, Theorem \ref{T:main theorem} may help to verify some special
cases of \cite[Conjecture 7.1]{AGK}.

\smallskip

\noindent {\it Organization of the paper.} In Section 2, we first recall the definitions and basic properties of various games. Then in Section 2.4, we reduce the proof of Theorem \ref{T:main theorem} to a concrete lemma (Lemma \ref{l-b-hpw}). In  Section 3, we first attach a rational hyperplane $\cH_P$ to each rational point, then we define a decomposition of $\QQ^n$ using the attached hyperplane. This decomposition is a direct generalization of the decomposition introduced by An in \cite{An2}, which plays a central role there. At the end of  Section 3, we come up with  the most important new ingredient in this paper, that is, we attach a line $\cL_{P}$ with well-chosen
bounds on its defining coefficients to each rational point $P$ in a certain class (Lemma \ref{L:v-existence}). Only with such careful chosen bounds and lines, the
proof goes through. The last two sections are devoted to proving Lemma \ref{l-b-hpw}.


\smallskip

\noindent {\it Acknowledgements.} We thank Jinpeng An for helpful discussions. Jun Yu is partially supported by the Recruitment Program of Global Young Experts of China.


\section{Games}\label{S:game}

In this section we first recall some basics of the $(\alpha,\beta)$-game, the hyperplane absolute game,
the hyperplane potential game and their winning sets. See \cite{An1}, \cite{AGK}, \cite{BFKRW}, \cite{FSU},
\cite{Mc}, \cite{Sc1} for more details. Then  we reduce the proof of Theorem \ref{T:main theorem} to a concrete lemma. We confine our discussion to subsets of a Euclidean
space $\mathbb{R}^{d}$. Let $\rho(B)$ denote the radius of a closed ball $B$.

\subsection{$(\alpha,\beta)$-game}\label{SS:Schmidt}

In \cite{Sc1}, Schmidt introduced the $(\alpha,\beta)$-game. Being played on $\RR^d$, the game involves two
parameters $\alpha,\beta\in (0,1)$, a target set $S\subset \RR^{d}$ and two players Alice and Bob. Let $i\ge 0$, at the $i$-th round, Bob chooses a closed ball $B_i$ with $\rho(B_{i})=\beta\rho(A_{i-1})$
(an arbitrary ball in case $i=0$), and Alice chooses a closed ball $A_i\subseteq B_i$ with
$\rho(A_i)=\alpha\rho(B_i)$. By this process there is a nested sequence of closed balls
\begin{equation*}
B_0\supseteq A_0\supseteq B_1\supseteq A_1\supseteq  B_2\supseteq \ldots \quad .
\end{equation*}
$S$ is called \emph{$(\alpha,\beta)$-winning} if Alice has a winning strategy ensuring that
$$\bigcap_{i=0}^\infty B_i\cap S\neq \emptyset,$$ regardless of how Bob chooses to play.
For an $\alpha\in(0,1)$,  $S$  is called \emph{$\alpha$-winning} if it is
$(\alpha,\beta)$-winning for any $\beta\in(0,1)$.

No proper subset of $\RR^{d}$ is $(\alpha,\beta)$-winning if $1-2\alpha+\alpha\beta\leq 0$
(\cite[Lemma 5]{Sc1}). The $\alpha$-winning sets enjoy many properties (\cite{Sc2}):
\begin{enumerate}
\item Given $\alpha,\alpha'\in(0,1)$, if $\alpha\geq\alpha'$, then an $\alpha$-winning set
is also $\alpha'$-winning. If $\alpha>1/2$, then no proper subset of $\mathbb{R}^{d}$
is $\alpha$-winning.
\item The intersection of countably many $\alpha$-winning sets is again an $\alpha$-winning set.
\item If $S$ is an $\alpha$-winning set, then $S$ is thick. Recall that a subset $S$ of $\mathbb{R}^{d}$
is \emph{thick} if its intersection with any nonempty open subset of $\mathbb{R}^{d}$ has full
Hausdorff dimension.
\item Let $\varphi:\RR^{d}\to \RR^{d}$ be a bi-Lipschitz homeomorphism. If $S$ is an $\alpha$-winning
set, then $\varphi(S)$ is $\alpha'$-winning for some $\alpha'$ depending on $\varphi$ and $\alpha$.
\end{enumerate}
As such, the $(\alpha,\beta)$-game has been a powerful tool for proving full dimensionality
(and non-emptiness) of fractal sets (\cite{An1}, \cite{An2}, \cite{An-B-V}, \cite{AGK}).

\subsection{Hyperplane absolute game}\label{SS:HAW}

The hyperplane absolute game was introduced in \cite{BFKRW}. It is played on a Euclidean space $\RR^{d}$.
Given a hyperplane $\cH$ and $\delta>0$, denote by $\cH^{(\delta)}$ the $\delta$-neighborhood of $\cH$,
$$\cH^{(\delta)}=\{\bx\in\RR^{d}:\mathrm{dist}(\bx,\cH)\leq\delta\}.$$ For $\beta\in(0,1/3)$, The
\emph{$\beta$-hyperplane absolute game} involves a parameter $\beta\in(0,1/3)$, a target set $S\subset \RR^{d}$ and two players Alice and Bob.
Let $i\ge 0$, at the $i$-th round, Bob chooses a closed ball $B_i$ of radius $\rho_i$
such that $B_{i}\subseteq B_{i-1}\setminus \cH_{i-1}^{(\delta_{i-1})}$ and $\rho_{i}\geq\beta\rho_{i-1}$
(an arbitrary ball in case $i=0$), and
Alice chooses a hyperplane neighborhood $\cH_i^{(\delta_i)}$ with $\delta_i\le\beta \rho_{i}$.
By this process there is a nested sequence of closed balls $$B_0\supseteq B_1
\supseteq B_2\supseteq \ldots\quad .$$ $S$ is called \emph{$\beta$-hyperplane absolute
winning} (\emph{$\beta$-HAW} for short) if Alice has a winning strategy ensuring that
$$\bigcap_{i=0}^\infty B_i\cap S\neq \emptyset,$$ regardless of how Bob chooses to play.  $S$ is called
\emph{hyperplane absolute winning}
(\emph{HAW} for short) if it is $\beta$-HAW for any $\beta\in(0,1/3)$.

We have the following properties of $\beta$-HAW sets and HAW sets (\cite{BFKRW}, \cite{KW3}),
\begin{enumerate}
\item Given $\beta,\beta'\in(0,1/3)$, if $\beta\geq\beta'$, then any $\beta'$-HAW set is also
$\beta$-HAW.
\item An HAW subset is $\alpha$-winning for any $\alpha$, $0<\alpha<1/2$.
\item The intersection of countably many HAW sets is again HAW.
\item Let $\varphi:\RR^{d}\to \RR^{d}$ be a $C^1$ diffeomorphism. If $S$ is an HAW set, then so is
$\varphi(S)$.
\end{enumerate}

We prove a strengthening of (2), which might be known to experts. It is of independent interest.

\begin{proposition}\label{P:HAW}
Given $\alpha,\beta\in (0,1)$, if $1-2\alpha+\alpha\beta>0$, then any HAW set is $(\alpha,\beta)$-winning.
In particular, any HAW set is $1/2$-winning.
\end{proposition}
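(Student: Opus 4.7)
The plan is to derive a winning strategy for Alice in the Schmidt $(\alpha,\beta)$-game from a winning strategy in a suitably chosen $\beta'$-hyperplane absolute game, by simulating each hyperplane-absolute move by a block of $k$ consecutive Schmidt rounds, where $k$ is large and $\beta'$ is small. The naive one-to-one simulation (have each Schmidt round realize one HAW round directly) forces a slab of half-width $\delta\leq\beta'\rho$ to fit beside a ball of radius $\alpha\rho$ inside the ambient ball, which only works when $\delta\leq(1-2\alpha)\rho$, hence essentially for $\alpha<1/2$. The hypothesis $1-2\alpha+\alpha\beta>0$ is exactly what makes the blockwise simulation succeed.

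Set $\gamma=1-2\alpha+\alpha\beta$ and $\lambda=\alpha\beta$. The core geometric observation is the following push estimate. Starting from a ball $B^{(0)}$ of radius $\rho$ and a hyperplane $\cH$ passing through its center (the worst case), Alice picks a half-space and in each of the next $k$ Schmidt rounds shifts the center of her ball $A^{(i)}$ by the maximum allowed $(1-\alpha)\rho_i$ away from $\cH$; Bob can counter-shift by at most $(1-\beta)\alpha\rho_i$, so the net outward progress per round is $\gamma\rho_i$ with $\rho_i=\lambda^i\rho$. Summing and adjusting for Alice's final move (her last ball $A^{(k-1)}$ has radius $\alpha\rho_{k-1}$ rather than $\rho_{k-1}$), I expect
\[
\dist\!\bigl(A^{(k-1)},\cH\bigr)\;\geq\;C_k\,\rho,\qquad C_k:=\frac{\gamma(1-\lambda^{k-1})}{1-\lambda}+(1-2\alpha)\lambda^{k-1}.
\]
Because $C_k\to\gamma/(1-\lambda)>0$ as $k\to\infty$, I can fix $k$ large enough that $C_k>0$ and then choose $\beta'\in(0,1/3)$ with $\beta'<\min\{C_k,\lambda^k\}$.

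With $k$ and $\beta'$ so fixed, the HAW hypothesis provides Alice with a winning strategy $\sigma$ in the $\beta'$-hyperplane absolute game on $S$. Her $(\alpha,\beta)$-game strategy groups Schmidt moves into blocks of length $k$: at the start of block $j$ she declares $B_{jk}$ to be the $j$-th HAW ball, consults $\sigma$ to obtain a hyperplane $\cH_j$ and $\delta_j\leq\beta'\rho(B_{jk})$, and applies the push-away strategy with respect to $\cH_j$ during the next $k$ Schmidt rounds. The push estimate then gives $\dist(A_{(j+1)k-1},\cH_j)\geq C_k\rho(B_{jk})>\delta_j$, so the Schmidt ball $B_{(j+1)k}\subset A_{(j+1)k-1}$ avoids $\cH_j^{(\delta_j)}$ and has radius $\lambda^k\rho(B_{jk})\geq\beta'\rho(B_{jk})$, i.e.\ constitutes a legal HAW response to $\sigma$. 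Hence $(B_{jk})_{j\geq0}$ is a legal play of the $\beta'$-HAW game against $\sigma$, so $\bigcap_jB_{jk}=\bigcap_iB_i$ meets $S$ and Alice wins the Schmidt game. Specializing to $\alpha=1/2$ gives $\gamma=\beta/2>0$ for every $\beta\in(0,1)$, yielding the ``in particular'' $1/2$-winning statement. The main obstacle I anticipate is the push estimate, especially controlling the end-correction $(1-2\alpha)\lambda^{k-1}$ when $\alpha>1/2$: one must verify that, although a single Schmidt round cannot fit $A^{(0)}$ outside the slab, the cumulative displacement over $k$ rounds---governed by $\gamma>0$---dominates this shortfall once $k$ is chosen large enough.
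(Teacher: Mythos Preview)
Your proposal is correct and follows essentially the same argument as the paper: both set $\gamma=\theta=1-2\alpha+\alpha\beta$, group Schmidt rounds into blocks of fixed length, have Alice push the center maximally away from the current hyperplane to accumulate net displacement $\sum_i\gamma\rho_i\ge\gamma\rho$ per block, and then choose $\beta'$ small enough (of order $(\alpha\beta)^N$) so that the simulated HAW game is legal. Your tracking of the distance from $A^{(k-1)}$ and the end-correction $(1-2\alpha)\lambda^{k-1}$ is slightly more refined than the paper's estimate on the center $y_{(k+1)N}$, but the structure of the argument is identical.
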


\begin{proof}
 For any $\alpha,\beta\in (0,1)$ satisfying $1-2\alpha+\alpha\beta>0$,
write $\theta=1-2\alpha+\alpha\beta$. Choose an $N\in\NN$ satisfying \begin{equation}\label{ine-theta}
(\alpha\beta)^N < \frac{1}{3}\theta.
\end{equation}
Write \begin{equation}\label{def-betab}
\beta'=\frac{1}{2}(\alpha\beta)^N.
\end{equation}
We are going to prove that $\beta'$-HAW implies $(\alpha,\beta)$-winning, from which the conclusion follows.
Let $S$ be an HAW subset of $\RR^d$.
Let $B_i=B(y_i,\rho_i)$ (resp. $A_i=B(x_i,\alpha\rho_i)$) be Bob's (resp. Alice's) choice at the $i$-th round
of the $(\alpha,\beta)$-game. Then the sequences of balls $\{B(y_i,\rho_i)\}, \{B(x_i,\alpha\rho_i)\}$ should
satisfy the following conditions,
\begin{equation}\label{center-radius}
\dist(x_i,y_i)\le(1-\alpha)\rho_i,\quad \dist(x_i,y_{i+1})\le \alpha(1-\beta)\rho_i, \quad
\rho_{i+1}=\alpha\beta \rho_i.
\end{equation}

We are going to construct a corresponding $\beta'$-hyperplane absolute game, in which Bob's choice at the $k$-th
round is the ball $B_{kN}$ chosen by himself at the $(kN)$-th round in the $(\alpha,\beta)$-game and Alice's choice
at the $k$-th round is a hyperplane neighborhood $\cH_{k}^{(\delta_k)}$ chosen according to her winning strategy.
Once such a game is constructed, we obviously have the outcome point $$x_{\infty}\in \bigcap_{k=0}^\infty B_{kN}$$
belonging to $S$. Thus $S$ is $(\alpha,\beta)$-winning. By definition, to construct such a game, we only need to
make sure that:
\begin{itemize}
\item[(a)] $\rho_{(k+1)N}\ge \beta' \rho_{kN}$.
\item[(b)] $B_{(k+1)N}\subset B_{kN}\setminus \cH_k^{(\delta_k)}$.
\end{itemize}
Since $\rho_{(k+1)N}=(\alpha\beta)^N\rho_{kN}$, (a) follows directly from \eqref{def-betab}.
Now we claim that if Alice chooses her ball $A_i$ ($kN\le i <(k+1)N)$ as far away from $\cH_k$ as possible,
then we have (b) as a consequence.

Indeed, for each $kN\le i <(k+1)N$, Alice can choose $x_i$ with
\begin{equation}\label{ine-x-hp}
 \dist(x_i,\cH_k)-\dist(y_i,\cH_k)=(1-\alpha)\rho_{i}.
\end{equation}
According to \eqref{center-radius}, no matter how Bob makes his choice, we always have
\begin{equation}\label{ine-y-hp}
  \dist(y_{i+1},\cH_k)-\dist(x_{i},\cH_k)\ge -\alpha(1-\beta)\rho_{i}.
\end{equation}
It follows from the inequalities \eqref{ine-x-hp} and \eqref{ine-y-hp} that
\begin{equation}\label{ine-hp-dist}
\dist(y_{i+1},\cH_k)-\dist(y_{i},\cH_k)\ge (1-\alpha-\alpha(1-\beta))\rho_{i}=\theta\rho_i.
\end{equation}
Summing up the above inequalities \eqref{ine-hp-dist} for all $i$ ($kN\le i <(k+1)N$), we get
\begin{equation}\label{ine-y-final}
\dist(y_{(k+1)N},\cH_k)\ge \theta\sum_{i=kN}^{(k+1)N-1}\rho_i\ge\theta\rho_{kN}.
\end{equation}
According to \eqref{ine-theta} and \eqref{def-betab}, we have
\begin{equation}\label{ine-fi}
\rho_{(k+1)N}+\delta_k\le ((\alpha\beta)^N+\beta')\rho_{kN} <(\frac{1}{3}\theta+\frac{1}{6}\theta)\rho_{kN}
=\frac{1}{2}\theta\rho_{kN}.
\end{equation}
Then, (b) follows from \eqref{ine-y-final} and \eqref{ine-fi} immediately.
\end{proof}

\subsection{Hyperplane potential game}\label{SS:HPW}

Being introduced in \cite{FSU}, the hyperplane potential game also defines a class of subsets of $\RR^{d}$
called \emph{hyperplane potential winning} sets. By the following proposition these
two classes of sets are indeed the same (\cite[Theorem C.8]{FSU}).

\begin{proposition}\label{P:HPW}
A subset $S$ of $\RR^{d}$ is hyperplane potential winning if and only if it is hyperplane absolute winning.
\end{proposition}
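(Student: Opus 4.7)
The plan is to establish the two implications separately. The direction HAW $\Rightarrow$ HPW is essentially formal: any HAW winning strategy for Alice can be promoted to an HPW one by having her, at each round, submit her intended hyperplane neighborhood $\cH_i^{(\delta_i)}$ as a singleton collection. A single term $\delta_i^s$ automatically satisfies the potential-sum bound, and the weaker HPW winning condition (the limit lies in $S$ together with Alice's chosen neighborhoods) is met because the HAW rules already place the limit in $S$.

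For the converse direction HPW $\Rightarrow$ HAW, I would proceed by a scheduling simulation. Fix a target HAW parameter $\beta \in (0, 1/3)$, choose an HPW parameter $\beta'$ sufficiently small, and a block size $N$ so that $N$ consecutive HAW rounds correspond to one simulated HPW round (with radius shrinking by at most $\beta^N$, which I would match to $\beta'$). At the start of each block, Alice queries her HPW winning strategy to obtain a countable list $\{\cH_{n,i}^{(\delta_{n,i})}\}$ of bounded $s$-potential. She then distributes these hyperplane neighborhoods one per HAW round, processing each only once Bob's ball has shrunk enough that the HAW width constraint $\delta \le \beta \rho$ accommodates it; smaller-width items are postponed to later rounds inside the block, and only finitely many survive any given HAW width threshold. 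The summability of $\delta_{n,i}$ combined with the geometric decay of Bob's radii guarantees the whole list is absorbed within the block, after which Alice updates her internal HPW state by feeding Bob's ball at the end of the block back in as his simulated HPW move.

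The main obstacle is the bookkeeping. One must verify that every hyperplane neighborhood in Alice's HPW output is eventually imposed in some HAW round (so that all of Bob's subsequent balls avoid it), that the HAW width bound is respected at the time of scheduling, and that the simulation yields a legal pair of plays in both games simultaneously. Once this is in place, the limit point of the HAW play coincides with that of the simulated HPW play, and since it lies outside every Alice-chosen neighborhood it must belong to $S$. As the construction works for every $\beta \in (0, 1/3)$, $S$ is HAW, completing the equivalence.
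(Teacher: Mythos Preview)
The paper does not prove this proposition; it is simply quoted from \cite[Theorem~C.8]{FSU} and used as a black box, so there is no argument in the paper to compare your sketch against.

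Your sketch, however, has genuine gaps in both directions. For HAW $\Rightarrow$ HPW, the claim that ``the HAW rules already place the limit in $S$'' presupposes that Bob's HPW play is a legal HAW play. But in the hyperplane potential game Bob is \emph{not} required to avoid Alice's neighborhoods---he need only choose $B_i\subseteq B_{i-1}$ with $\rho_i\ge\beta\rho_{i-1}$---so his sequence of balls need not be one on which Alice's HAW strategy is even defined, let alone one for which it guarantees the outcome lies in $S$. An extra argument is required (for instance, that if Bob persistently refuses to avoid a fixed $\cH^{(\delta)}$ then the limit point lands in that neighborhood and Alice wins HPW anyway, together with a treatment of the parameter mismatch), and this is not ``essentially formal.'' For HPW $\Rightarrow$ HAW, the block-scheduling idea is in the right spirit, but the assertion that ``the whole list is absorbed within the block'' cannot hold when a single HPW move outputs infinitely many neighborhoods: a block of $N$ HAW rounds lets Alice impose only $N$ of them. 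The bound $\sum_k\delta_{i,k}^\gamma\le(\beta\rho_i)^\gamma$ controls the \emph{sizes} of the $\delta_{i,k}$, not their \emph{number}. A correct argument must allow the queue of pending neighborhoods to spill across blocks and must use the potential bound quantitatively to show that every item is eventually scheduled before Bob's HAW ball shrinks below the width needed to play it; carrying this out is precisely the nontrivial content of the proof in \cite{FSU}.
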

\noindent As such, the hyperplane potential game is a powerful tool for proving the HAW property
since the hyperplane potential game is more flexible than the hyperplane absolute game in some
aspects.

The hyperplane potential game involves two parameters $\beta\in(0,1)$, $\gamma>0$, a target set $S\subset \RR^d$ and two players Alice and Bob.
Let $i\ge 0$, at the $i$-th round,
Bob chooses a closed ball $B_i$ of radius $\rho_i$ such that $\rho_{i}\geq\beta\rho_{i-1}$ (an arbitrary
ball in case $i=0$), and Alice chooses a countable family of hyperplane neighborhoods
$\{\cH_{i,k}^{(\delta_{i,k})}: k\in \NN\}$ such that
\begin{equation}\label{HPW ine}
\sum_{k=1}^\infty \delta_{i,k}^\gamma\le(\beta \rho_{i})^\gamma.
\end{equation}
By this process there is a nested sequence of closed balls $$B_0\supseteq B_1\supseteq B_2\supseteq\ldots\quad .$$
$S$ is called \emph{$(\beta,\gamma)$-hyperplane potential winning}
(\emph{$(\beta,\gamma)$-HPW} for short) if Alice has a winning strategy ensuring that
$$\bigcap_{i=0}^\infty B_i\cap\Big(S\cup\bigcup_{i=0}^\infty\bigcup_{k=1}^\infty \cH_{i,k}^{(\delta_{i,k})}\Big)
\ne\emptyset,$$ regardless of how Bob chooses to play. $S$ is called \emph{hyperplane potential winning}
(\emph{HPW} for short) if it is $(\beta,\gamma)$-HPW for any $\beta\in(0,1)$ and $\gamma>0$.

\subsection{Reduction of Theorem \ref{T:main theorem}.} For the proof of Theorem \ref{T:main theorem}, we may fix $$d\ge 2, \text{ and } \rr\in \cR_d'$$ from now on.
Then it is convenient to introduce the following notation.
\begin{definition}
Let $B\subset \RR^d$ be a closed ball, $\beta\in(0,1)$ and $\gamma>0$. Say a subset $S\subset \RR^d$ is \emph{$(B,\beta,\gamma)$-HPW} if Alice can win the $(\beta,\gamma)$-hyperplane potential game whenever Bob chooses $B$ as his $B_0$.
\end{definition}
Theorem \ref{T:main theorem} can be deduced from the following lemma.
\begin{lemma}\label{l-b-hpw}
For any closed ball $B_0\subset \RR^d$ of radius $\rho_0\le 1$, any $\beta\in(0,1)$ and any $\gamma>0$, the set $\Bad(\rr)$ is $(B_0,\beta,\gamma)$-HPW.
\end{lemma}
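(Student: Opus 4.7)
The plan is to give Alice an explicit strategy in the $(\beta,\gamma)$-hyperplane potential game starting from $B_0$. Fix a small constant $\epsilon>0$ to be chosen in terms of $\beta$, $\gamma$, $d$ and $\rr$; the strategy will arrange that Alice's outcome point either lies in $\Bad_\epsilon(\rr)\subset\Bad(\rr)$ or is swept up by one of her hyperplane neighborhoods. For a rational $P=\bp/q\in\QQ^d$ call
\[
\Delta_P=\Big\{\bx\in\RR^d:\max_{1\le i\le d}q^{r_i}|qx_i-p_i|<\epsilon\Big\}
\]
the forbidden box, so that any point lying in no $\Delta_P$ automatically belongs to $\Bad_\epsilon(\rr)$. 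Alice's job in round $n$ is then to cover, by hyperplane neighborhoods of total $\gamma$-potential at most $(\beta\rho_n)^\gamma$, the union of all $\Delta_P\cap B_n$ for which $q$ lies in a dyadic window $I_n$ calibrated to the scale $\rho_n$ of Bob's ball.

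Since $\rr\in\cR'_d$, exactly one coordinate direction is singled out: either one weight is strictly smaller than the $d-1$ common others, or one is strictly larger. In either case $\Delta_P$ is a product of a thin slab in one family of directions and a thick slab in the complementary direction. Following Section 3, attach to each rational $P$ the hyperplane $\cH_P$ perpendicular to the thin direction(s) and passing through $\bp/q$. The decomposition introduced there then sorts the rationals in the window $I_n$ into a few classes according to how $\cH_P$ sits relative to $B_n$. For the generic classes $\Delta_P\cap B_n\subset\cH_P^{(\delta_P)}$ with $\delta_P$ of order the minimal side-length of $\Delta_P$, and the usual spacing lower bound $\ge 1/q$ between distinct rationals of denominator $q$ caps the count in each dyadic denominator class; summing $\delta_P^\gamma$ gives a contribution of order $\epsilon^{\gamma}(\beta\rho_n)^\gamma$, which fits inside the potential budget once $\epsilon$ is small enough.

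The crux, and the step expected to be the main obstacle, is the residual class where $\cH_P$ is unfavourably positioned relative to $B_n$ and the direct hyperplane covering leaves too much uncovered. Here Lemma \ref{L:v-existence} intervenes: for each such exceptional $P$ it produces a line $\cL_P$ whose defining coefficients satisfy carefully chosen size bounds. The role of $\cL_P$ is to constrain the relative position of two distinct exceptional rationals: if the boxes of $P_1\neq P_2$ both meet $B_n$ and are not already separated by one of the $\cH_P$'s, then $P_1-P_2$ is forced to lie essentially along $\cL_{P_1}$, and the coefficient bounds then limit how many such rationals can coexist. This reduces the counting from "rationals in a $d$-dimensional ball" to "rationals on a line segment", which is the quantitative saving that makes the potential sum converge in dimension $d\ge 3$. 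Covering the exceptional boxes by hyperplane neighborhoods perpendicular to a single remaining favourable direction, with widths still of order the minimal box side, then produces an additional contribution again controlled by $\epsilon^{\gamma}(\beta\rho_n)^\gamma$.

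Combining the two coverings in each round and taking countable unions over all rounds verifies Alice's strategy: the outcome point either avoids every $\Delta_P$, hence lies in $\Bad_\epsilon(\rr)$, or is caught by one of Alice's hyperplane neighborhoods, which is all the HPW formalism requires. The real technical work, to be carried out in Sections 4 and 5, is the construction of $\cL_P$ with the coefficient control asserted in Lemma \ref{L:v-existence}, together with the line-based counting estimate that it enables; the rest of the argument is the geometric-series bookkeeping sketched above.
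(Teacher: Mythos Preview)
Your outline misses the central mechanism of the paper and replaces it with a counting argument that is not what is actually carried out and, as stated, would not close.

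The paper does \emph{not} cover $\bigcup_P \Delta_c(P)\cap B$ by many individual hyperplane neighborhoods $\cH_P^{(\delta_P)}$ and then sum potentials over rationals. Instead, the key step is Proposition~\ref{P:main prop}: for every $n,k$ and every ball $B\in\sB_n$, all \emph{maximal} rationals $P\in\sV_{n+k,k}(B)$ lie on one common hyperplane. Consequently (Corollary~\ref{P:main2}) a \emph{single} hyperplane $E_k(B)$ and a \emph{single} width $R^{-(n+k)}\rho_0$ cover all the relevant boxes at once, and Alice plays only the family $\{E_k(B_{i(n)})^{(R^{-(n+k)}\rho_0)}:k\ge1\}$; the potential sum is then a pure geometric series in $k$ controlled by the choice of $R$ in \eqref{Rvalues}. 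No counting of rationals enters the budget estimate at all.

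Two specific points where your sketch diverges: (i) You never introduce the maximality partial order (Definition~\ref{D:maximal}) or the set $\sV$, yet this is indispensable---the proof of Proposition~\ref{P:main prop} for $k\ge2$ works by showing that if $P_1\notin\cH_{P_2}$ then the intersection $P_0=\cL_{P_1}\cap\cH_{P_2}$ satisfies $\Delta_c(P_1)\subsetneq\Delta_c(P_0)$, contradicting maximality of $P_1$. (ii) You describe $\cL_P$ as a device for ``line-based counting'' forcing $P_1-P_2$ to lie along $\cL_{P_1}$; that is not its role. It is used to manufacture the dominating rational $P_0$ just described, with the coefficient bounds of Lemma~\ref{L:v-existence} ensuring both that $q_0\le q_1|\ba_{P_2}\cdot\bv_{P_1}|$ is much smaller than $q_1$ and that $|P_0-P_1|$ is small enough to force the strict containment \eqref{containment}. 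Your ``spacing $\ge 1/q$ caps the count'' heuristic, together with a sum of individual $\delta_P^\gamma$, does not obviously yield a bound of order $(\beta\rho_n)^\gamma$ in dimension $d\ge3$, and in any case is not the route the paper takes.
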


\begin{proof}[Proof of Theorem \ref{T:main theorem} modulo Lemma \ref{l-b-hpw}.]
In view of Proposition \ref{P:HPW}, to prove Theorem \ref{T:main theorem}, it suffices to prove that for any $\beta\in(0,1)$ and any $\gamma>0$, Alice has a winning strategy for the $(\beta,\gamma)$-hyperplane potential game played on $\RR^d$ with target set $\Bad(\rr)$. Denote the closed ball chosen by Bob at the $i$-th round as $B_i$ with radius $\rho_i$. By \cite[Remark 2.4]{AGK}, we may assume $\rho_0\le 1$ without loss of generality, which completes the proof of Theorem \ref{T:main theorem} modulo Lemma \ref{l-b-hpw}.
\end{proof}

\begin{remark}
The rest of the paper is devoted to proving Lemma \ref{l-b-hpw}. From now on, we fix a triple  $\Phi$ which consists of
\begin{itemize}
\item a closed ball $B_0\subset\RR^d$ of radius $\rho_0\le 1$,
\item a number $\beta\in (0,1)$,
\item a number $\gamma>0$.
\end{itemize}
\end{remark}
\section{Attaching a hyperplane and a line}\label{S:line}

Given a lattice $L$ in $\RR^d$, let $d(L)=\vol(\RR^{d}/L)$ denote the covolume of $L$. We shall need the following version of  Minkowski's linear forms theorem (\cite[Theorem 2C]{Sc2}).

\begin{theorem}\label{T:Minkowski}
Let $d\ge 2$. Given linearly independent linear forms $l_1,\ldots,l_d$ on $\RR^d$, let $(l_1,\ldots,\l_d)$
denote the linear transform from $\RR^{d}$ to itself generated by the linear forms $l_1,\ldots,l_d$.
For any lattice $L$ in $\RR^d$ and positive numbers $A_1,\ldots,A_d$ satisfying
\begin{equation*}
A_1\cdots A_d\geq |d(L)| \cdot|\det(l_1,\ldots,\l_d)|,
\end{equation*}
there exists $\bx\in L\setminus \{\bzero\}$ such that
\begin{equation*}
|l_1(\bx)|\leq A_1 \quad \text{ and } \quad |l_i(\bx)|< A_i \quad (2\le i\le d).
\end{equation*}
\end{theorem}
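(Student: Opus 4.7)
The plan is to transfer the problem, via the invertible linear map $T:=(l_1,\ldots,l_d)\colon\RR^d\to\RR^d$, to the search for a nonzero point of the image lattice $TL$ inside a coordinate box. Since $T$ scales covolumes by $|\det T|$, one has $d(TL)=|\det T|\cdot d(L)$, and finding $\bx\in L\setminus\{\bzero\}$ with the prescribed bounds amounts to finding a nonzero $\by\in TL$ in the box $\{\by=(y_1,\ldots,y_d):|y_1|\le A_1,\ |y_i|<A_i\ (2\le i\le d)\}$.

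The key obstacle is that this target region mixes one weak inequality with several strict ones, so neither the open-body form of Minkowski's convex body theorem (which requires strict volume inequality but produces strict coordinate inequalities) nor the closed-body form (which allows equality in volume but produces only weak coordinate inequalities) applies out of the box. I would resolve this by approximating from above in the first coordinate. For each $\delta>0$, consider the bounded, open, symmetric, convex box
\begin{equation*}
C_\delta:=\{\by\in\RR^d:|y_1|<A_1+\delta,\ |y_i|<A_i\ (2\le i\le d)\},
\end{equation*}
whose volume $2^d(A_1+\delta)A_2\cdots A_d$ strictly exceeds $2^d A_1\cdots A_d\ge 2^d\,d(TL)$ by the hypothesis. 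The open-body form of Minkowski's theorem then produces $\bx_\delta\in L\setminus\{\bzero\}$ with $T\bx_\delta\in C_\delta$, i.e.\ $|l_1(\bx_\delta)|<A_1+\delta$ and $|l_i(\bx_\delta)|<A_i$ for $2\le i\le d$.

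To conclude, I would exploit the discreteness of $L$. For every $\delta\in(0,1)$ the vector $\bx_\delta$ lies in the set $\{\bx\in L:|l_1(\bx)|<A_1+1,\ |l_i(\bx)|<A_i\ (2\le i\le d)\}$, which is bounded (because the $l_i$ are linearly independent) and therefore finite. By pigeonhole there exist a single $\bx\in L\setminus\{\bzero\}$ and a sequence $\delta_n\downarrow 0$ with $\bx_{\delta_n}=\bx$ for all $n$. Letting $n\to\infty$ in $|l_1(\bx)|<A_1+\delta_n$ yields $|l_1(\bx)|\le A_1$, while the strict inequalities $|l_i(\bx)|<A_i$ for $i\ge 2$ are unaffected. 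The only non-elementary ingredient is the standard open-body Minkowski theorem; the rest is volume arithmetic together with the finiteness observation above.
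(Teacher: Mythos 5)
Your proof is correct: the reduction to the image lattice $TL$, the application of Minkowski's convex body theorem to the slightly enlarged open boxes $C_\delta$, and the passage to the limit $\delta_n\downarrow 0$ via the discreteness (hence finiteness in a bounded region) of $L$ are all sound, and together they yield exactly the stated mixed weak/strict conclusion. The paper gives no proof of this statement---it is quoted from \cite[Theorem 2C]{Sc2}---and your argument is the standard derivation of the linear forms theorem from the convex body theorem, so it matches the intended source and supplies the omitted details correctly.
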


We make the following convention throughout this paper. Whenever we write a rational point $P\in \QQ^d$ as
\begin{equation*}\label{P}
P=\frac{\bp}{q} \text{ with } \bp=(p_1,\ldots,p_d),
\end{equation*}
we always mean \begin{equation*}
q>0 \quad \text{and} \quad (p_1,\ldots,p_d,q)=1.
\end{equation*}
Such a form is unique. Then, the denominator $q$ is a function of $P$. We write it as $q=q(P)$.

\subsection{Attaching a hyperplane}\label{SS:def-fp} For convenience, we set
\begin{equation}\label{s}
s=s(\rr)=\max_{1\le i\le d}r_i, \text{ and choose } i_0\in \{j: r_j=\min_{1\le i\le d}r_i\}.
\end{equation}
For each rational point $P=\bp/q\in \QQ^d$, we define a lattice
\begin{equation}\label{Lambda-P}
\Lambda_P=\{b\frac{\bp}{q}+\bz: \quad b\in \ZZ,\bz\in \ZZ^d\}.
\end{equation}
Since $[\Lambda_P:\ZZ^d]=q$, we have $d(\Lambda_P)=1/q $. Hence $d(\Lambda_P^*)=q $, where $\Lambda_P^*$
means the dual lattice of $\Lambda_P$. Note that
\begin{equation*}
\Lambda_P^*=\{\ba\in\ZZ^d: \ba\cdot\bp\in q\ZZ\}.
\end{equation*}
Choose and fix
\begin{equation}\label{def-good}
\ba_P\in X_P=[-q^{r_1},q^{r_1}]\times \cdots \times [-q^{r_d},q^{r_d}] \cap \Lambda_P^*\setminus \{\bzero\}.
\end{equation}
The non-emptiness of $X_P$ is ensured by Theorem \ref{T:Minkowski}.

Define an affine form
\begin{equation}\label{affine-form}
F_P(\bx)=\ba_P\cdot\bx+C_P, \text{ where }C_P=-q^{-1}\ba_P\cdot\bp\in \ZZ,
\end{equation}
and a hyperplane
\begin{equation}\label{Hyperplane}
\cH_P=\{\bx\in\RR^d: F_P(\bx)=0\}.
\end{equation}
It is obvious that $P\in \cH_P$.

Write $\ba_{P}=(a_1,\dots,a_{d})$. We define two $\NN$-valued functions on $\QQ^d$,
\begin{equation}\label{xi-P} \xi_P=\xi(P)=\max\{|a_{i}|: 1\leq i\leq d\}\end{equation}
and \begin{equation}\label{H-P} H(P)=q(P)\xi(P).\end{equation} According to \eqref{def-good},
\begin{equation}\label{ine-xi-h}\xi_P\le q(P)^s \quad \text{ and } \quad  H(P)\le q(P)^{1+s}.\end{equation}
Note that \begin{equation}\label{bound-a} |a_{i}|\leq\min\{q(P)^{r_{i}},\xi_{P}\} \text{ for any   $1\leq i\leq d$}\end{equation}

\subsection{Constants and subdivisions}\label{SS:subdivision} In this subsection, we introduce some constants and subdivisions.
For $n\geq 1$, let $\sB_n$ be the set of closed balls defined by
\begin{equation}\label{def-b-n}
\sB_n=\{B\subset B_0:
\beta R^{-n}\rho_0<\rho(B)\le R^{-n}\rho_0\},
\end{equation} where $R$ ia a positive number satisfying
\begin{equation}\label{Rvalues}(R^\gamma-1)^{-1}\leq (\frac{\beta^2}{2})^\gamma.\end{equation}
Note that this implies \begin{equation}\label{R-large}R>2\beta^{-2}>\max\left\{2,\beta^{-1}\right\},
\end{equation} which shows that those $\sB_n$ are mutually disjoint.

Now we define a decomposition of $\QQ^d$. Write
\begin{equation}\label{values}
c=\frac{1}{8}d^{-2}\rho_0 R^{-18d^2} \quad \text{ and } \quad H_n=dc\rho_0^{-1}R^{n}.
\end{equation}
Put \begin{equation}\label{B-n}\mB_n=\{P\in\QQ^{d}: H_n\le H(P) <H_{n+1} \}.\end{equation}
Set \begin{equation}\label{B-n1}\mB_{n,1}=\{P\in\mB_{n}: H_n^{\frac{1}{1+s}}\leq q(P)<
H_n^{\frac{1}{1+s}}R^{12d^2}\}.\end{equation} For $k\geq 2$, write \begin{equation}\label{Q-nk}
Q_{n,k}=H_n^{\frac{1}{1+s}}R^{d(k-2)+12d^2}\end{equation} and set \begin{equation}\label{B-nk}
\mB_{n,k}=\{P\in\mB_{n}: Q_{n,k} \leq q(P) < Q_{n,k+1} \}.\end{equation}

 By definition, those $\mB_{n,k}$ are mutually disjoint. The following lemma summarizes some basic properties of this decomposition.
\begin{lemma}\label{L:decomposition}
\
\begin{enumerate}
\item $\QQ^{d}=\bigsqcup_{n=1}^{\infty}\bigsqcup_{k=1}^{n-1}\mB_{n,k}$.
\item For $P\in \mB_{n,k}$ with $k\ge 2$, we have
\begin{equation}\label{ine qxi}
\psi_P:=q^{-1-s}H(P)\leq R^{-dk-10d^2}.
\end{equation}
\end{enumerate}
\end{lemma}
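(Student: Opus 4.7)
The plan is to handle the two parts separately; both reduce to elementary exponent bookkeeping, with the only real checkpoint being that the factor $R^{-18d^2}$ built into $c$ leaves enough cushion.

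For part (1), first note that for any rational $P$ the quantity $H(P)=q(P)\xi(P)$ is a positive integer (since $\ba_P\ne\bzero$ has integer coordinates and $q(P)\ge 1$), and in particular $H(P)\ge 1$. By the choice $c=\tfrac{1}{8}d^{-2}\rho_0R^{-18d^2}$ we have $H_1=dc\rho_0^{-1}R=\tfrac{1}{8d}R^{1-18d^2}<1$, so every rational $P$ lies in a unique $\mB_n$ determined by $H_n\le H(P)<H_{n+1}$. Next, \eqref{ine-xi-h} gives $q(P)\ge H_n^{1/(1+s)}$, and from the definition of $Q_{n,k}$ the intervals $[H_n^{1/(1+s)},Q_{n,2})$ (for $k=1$) and $[Q_{n,k},Q_{n,k+1})$ (for $2\le k\le n-1$) tile $[H_n^{1/(1+s)},Q_{n,n})$, so once $q(P)<Q_{n,n}$ is shown, the index $k\in\{1,\dots,n-1\}$ is uniquely determined. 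Since $\xi(P)\ge 1$ gives $q(P)\le H(P)<H_{n+1}$, the upper bound reduces to verifying
\begin{equation*}
H_{n+1}\le Q_{n,n}, \quad\text{equivalently,}\quad H_n^{s/(1+s)}\le R^{d(n-2)+12d^2-1}.
\end{equation*}
A short case split handles this: for $n<18d^2$ one has $H_n<1$ so the LHS is $<1\le$ RHS (the RHS exponent is non-negative since $d\ge 2$); for $n\ge 18d^2$ one uses $s/(1+s)\le 1/2$ (because $s\le 1$) to bound the LHS by $R^{(n-18d^2)/2}$, which is dominated by the RHS whenever $d\ge 2$.

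For part (2), a direct computation: for $P\in\mB_{n,k}$ with $k\ge 2$, the inequality $q(P)\ge Q_{n,k}$ gives $q(P)^{1+s}\ge H_n\cdot R^{(d(k-2)+12d^2)(1+s)}$, and combining with $H(P)<H_{n+1}=R\cdot H_n$ yields
\begin{equation*}
\psi_P=\frac{H(P)}{q(P)^{1+s}}<R^{1-(d(k-2)+12d^2)(1+s)}.
\end{equation*}
Since $\rr\in\cR'_d$ has at least $d-1$ coordinates equal to $s$, the constraint $\sum r_j=1$ forces $s\ge 1/d$, hence $1+s\ge(d+1)/d$. Substituting this lower bound, the target $\psi_P\le R^{-dk-10d^2}$ reduces after rearrangement to the elementary inequality $k+10d+2d^2\ge 3$, which is automatic for $k\ge 2$, $d\ge 2$. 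Thus the whole argument hinges on a single genuine checkpoint, namely $H_{n+1}\le Q_{n,n}$ uniformly in $n\ge 1$, which is exactly what the calibration $c\propto R^{-18d^2}$ was set up to guarantee.
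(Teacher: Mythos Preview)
Your argument is correct and proceeds along the same lines as the paper: both parts are straightforward exponent bookkeeping from the definitions of $H_n$ and $Q_{n,k}$, and your bound $\psi_P<R^{1-(d(k-2)+12d^2)(1+s)}$ in part~(2) is exactly the paper's computation. The only cosmetic difference is that the paper frames part~(1) as a contradiction (if $P\in\mB_{n,k}$ with $k\ge n$ then $R^n\ge H_n>R^{d^2k}$), invoking $s\le 1/(d-1)$, whereas you argue directly that $H_{n+1}\le Q_{n,n}$ using only $s\le 1$; note that your tiling step tacitly assumes $n\ge 2$, but $n=1$ is vacuous since $H_2<1\le H(P)$ forces $\mB_1=\emptyset$.
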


\begin{proof}
As $H_1<1$, it follows that $\QQ^{d}=\bigsqcup_{n=1}^{\infty}\mB_n$. By definition, $\mB_n=\bigsqcup_{k=1}^{\infty} \mB_{n,k}$.
Hence to prove (1), it suffices to prove that $\mB_{n,k}=\emptyset$ for $k\ge n$. We argue by contradiction. Note that
$H_{1}<1$, so we may assume that $\mB_{n,k}\ne \emptyset$ for some $n$ and $k$ with $k\geq n\geq 1$.
Taking $P\in \mB_{n,k}$, then by definition we have
\begin{equation*}
H_n^{\frac{1}{1+s}}R^{d(k-2)+12d^2}=Q_{n,k}\leq q(P)< H_{n+1}=H_{n}R.
\end{equation*}
Since $H_{n}\leq R^{n}$ and $s\leq1/(d-1)$, we have
\begin{equation*}
R^{n}\geq H_{n}\geq R^{\frac{s+1}{s}(d(k-2)+12d^2-1)}\geq R^{d(d(k-2)+12d^2-1)} \geq R^{d^{2}k}.
\end{equation*}
Then $k<n$, which leads to a contradiction.

The inequality \eqref{ine qxi} is verified by a direct computation,
\begin{equation*}
\psi_P=q^{-1-s}H(P)\leq Q_{n,k}^{-1-s}H_{n+1}=R^{1-(1+s)(d(k-2)+12d^2)}\le R^{-dk-10d^2}.
\end{equation*}
\end{proof}

\begin{remark}
We use $k_P$ to denote the unique number such that
$P\in \mB_{*,k_P}$, which is well-defined by Lemma \ref{L:decomposition}(1).
\end{remark}

\subsection{Attaching a line}\label{SS:attach-line}

In this subsection, we attach a suitable rational line to each rational point $P$ with $k_P\ge 2$.
We begin with a lemma.
\begin{lemma}\label{L:v-existence}
Let $P=\bp/q \in \QQ^d$ and $\ba_P=(a_1,\ldots,a_d)$. Set
\begin{equation}\label{def-w-p}
\bw_P=(w_1,\ldots,w_d) \text{ with }w_i=1 \text{ for } i\ne i_0, \text{ and }w_{i_0}=\psi_P,
\end{equation}
where $i_0$ is given in \eqref{s}. Then there exists $\bv=(v_1,\ldots,v_d)\in \Lambda_P \setminus \{\bzero\}$ such that
\begin{equation}\label{ine-def-v}
|v_i|\le (d-1)w_iq^{-r_i} \quad \text{for each } 1\le i\le d.
\end{equation}
\end{lemma}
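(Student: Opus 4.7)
The plan is to produce $\bv$ by applying Minkowski's first theorem to the rank-$(d-1)$ sublattice
\begin{equation*}
\Lambda'_P = \{\bv \in \Lambda_P : \ba_P \cdot \bv = 0\}
\end{equation*}
sitting inside the hyperplane $H_0 = \{\bx \in \RR^d : \ba_P \cdot \bx = 0\}$. Setting $m = \gcd\{\ba_P \cdot \bv : \bv \in \Lambda_P\} \ge 1$, a fundamental-domain computation based on the short exact sequence $0 \to \Lambda'_P \to \Lambda_P \to m\ZZ \to 0$ gives $d(\Lambda'_P) = |\ba_P|/(mq)$. Any nonzero element of $\Lambda'_P$ that lies in the box
\begin{equation*}
K = \prod_{i=1}^d \left[-(d-1)w_iq^{-r_i},(d-1)w_iq^{-r_i}\right]
\end{equation*}
satisfies the conclusion of the lemma, so the proof reduces to establishing the slice estimate $\vol_{d-1}(K \cap H_0) \ge 2^{d-1}|\ba_P|/(mq)$.

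For $d \ge 3$ I would bound the slice volume from below by Brunn's inequality: concavity of $t \mapsto \vol_{d-1}(K \cap \{\ba_P \cdot \bx = t\})^{1/(d-1)}$ together with the central symmetry of $K$ forces the slice to be largest at $t = 0$, which combined with the coarea formula yields
\begin{equation*}
\vol_{d-1}(K \cap H_0) \ge \frac{|\ba_P|\vol_d(K)}{2 M_*}, \qquad M_* = (d-1)\sum_i w_iq^{-r_i}|a_i|.
\end{equation*}
The decisive input is the estimate $\sum_i w_iq^{-r_i}|a_i| \le d\psi_P$, obtained by combining $|a_i|\le \xi_P$ on the $d-1$ indices $i \ne i_0$ (where the hypothesis $\rr \in \cR'_d$ makes $w_iq^{-r_i} = q^{-s}$ uniform) with $|a_{i_0}|\le q^{r_{i_0}}$ on the remaining index. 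Using $\vol_d(K) = 2^d(d-1)^d\psi_P/q$, the required inequality reduces to $m(d-1)^{d-1} \ge d$, which holds for every $d \ge 3$ and $m \ge 1$.

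The Brunn bound loses a factor of $2$ when $d = 2$, so I would treat that case by a direct computation of the slice. If $a_{i_0}\ne 0$, the slice $K \cap H_0$ is a segment of length $2|\ba_P|\min(c_1/|a_2|,c_2/|a_1|)$ with $c_i = w_iq^{-r_i}$, and the bounds $|a_{i_0}|\le q^{r_{i_0}}$ and $|a_i|\le \xi_P$ on the two indices each force the corresponding ratio to be $\ge 1/q$; if $a_{i_0}=0$, the lattice $\Lambda'_P$ contains the vector $(0,\ldots,1/g,\ldots,0)$ (with $1/g$ in position $i_0$) for $g = \gcd(q,p_{i_1})$, and the divisibility relation $q \mid a_{i_1}p_{i_1}$ implied by $\ba_P \in \Lambda_P^*$ forces $\xi_P = |a_{i_1}|\ge q/g$, hence $1/g \le \xi_P/q = c_{i_0}$. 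The main obstacle throughout is the key estimate $\sum_i w_iq^{-r_i}|a_i| \le d\psi_P$: this is the place where the hypothesis $\rr \in \cR'_d$ enters essentially, allowing the $d-1$ components of $\ba_P$ outside the $i_0$-slot to be controlled uniformly via $\xi_P$.
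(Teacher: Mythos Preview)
Your proof is correct, but it takes a substantially different route from the paper's argument.

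The paper does not pass to the sublattice $\Lambda'_P=\Lambda_P\cap\ba_P^\perp$ at all. Instead it applies Minkowski's linear forms theorem (Theorem~\ref{T:Minkowski}) directly to $\Lambda_P$ in $\RR^d$: one chooses the index $j$ maximizing $w_j|a_j|q^{-r_j}$, replaces the $j$-th coordinate form by the linear form $\bx\mapsto\ba_P\cdot\bx$, and takes bounds $A_i=w_iq^{-r_i}$ for $i\ne j$ and $A_j=1$. The determinant check $\prod_{i\ne j}w_iq^{-r_i}\ge q^{-1}|a_j|$ is a two-line verification in each of the cases $j=i_0$ and $j\ne i_0$. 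The resulting $\bv$ automatically satisfies $\ba_P\cdot\bv=0$ by integrality, and then $|v_j|$ is recovered from this orthogonality via the maximality of $j$:
\[
|v_j|\le |a_j|^{-1}\sum_{i\ne j}|a_i||v_i|\le (d-1)w_jq^{-r_j}.
\]
So the ``pivot'' index $j$ plays the role that your Brunn slice estimate plays, but in a completely elementary way.

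What your approach buys is a clean geometric picture (covolume of the kernel sublattice via the short exact sequence, slice volume via Brunn--Minkowski); what it costs is extra machinery and the need to treat $d=2$ separately because the Brunn bound loses a factor of $2$. The paper's choice of $j$ avoids both: it works uniformly for all $d\ge 2$ with no dimension split, and it never invokes anything beyond Minkowski's theorem on linear forms. Your key estimate $\sum_i w_i|a_i|q^{-r_i}\le d\psi_P$ is in the same spirit as the paper's case split on $j$---both are where the hypothesis $\rr\in\cR'_d$ is used---but the paper's formulation is sharper since it needs only the single term $w_j|a_j|q^{-r_j}$ rather than the full sum.
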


\begin{proof}
Let $j\in[1,d]$ be such that
\begin{equation*}
w_{j}|a_{j}|q^{-r_{j}}=\max\{w_{i}|a_{i}|q^{-r_{i}}: 1\leq i\leq d\}.
\end{equation*}
Set
\begin{equation*}
\Pi_j:=\{\bx=(x_1,\ldots,x_d)\in \RR^d:|x_i|\le w_iq^{-r_i} \text{ for } i\ne j,\  |\ba_P\cdot \bv|<1\}.
\end{equation*}
We claim that
\begin{equation}\label{vol-pi-j}
\Vol(\Pi_j)\ge q^{-1}.
\end{equation}
There are two cases.
\begin{itemize}
\item If $j\ne i_0$, then $\Vol(\Pi_j)=|a_j|^{-1}\prod_{i\ne j}w_iq^{-r_j}\ge q^{-1}$.
\item If $j=i_0$, then $\Vol(\Pi_j)=|a_j|^{-1}\prod_{i\ne j}w_iq^{-r_j}\ge \xi_P^{-1}\psi_Pq^{-1+s}\ge q^{-1}$.
\end{itemize}
  In view of Theorem \ref{T:Minkowski},  \eqref{vol-pi-j} and the fact $d(\Lambda_P)=1/q$,  there exists
  $$\bv=(v_1,\ldots,v_d)\in \Pi_j\cap \Lambda_P \setminus \{\bzero\}.$$
 Since $\ba_P\in \Lambda_P^{*}$, $|\ba_P\cdot \bv|<1$ implies $|\ba_P\cdot \bv|=0$. Hence we have the following estimate,
\begin{equation*}
|v_{j}|\leq |a_{j}|^{-1}\sum_{i\neq j}|a_iv_i|\leq |a_{j}|^{-1}\sum_{i\neq j} w_{i}\frac{|a_{i}|}{q^{r_{i}}}
\leq (d-1)w_{j}q^{-r_{j}},
\end{equation*}
which completes the proof.
\end{proof}

According to Lemma \ref{L:v-existence}, for each rational point $P$ with
$k_P\ge 2$, the set of vectors $\bv$ satisfying \eqref{ine-def-v}
is nonempty. We choose and fix one as $\bv_P$. Then we define a rational line passing through $P$ by
\begin{equation}\label{Line}
\cL_{P}=\{\bx\in \RR^d: \bx-\frac{\bp}{q}=\lambda \bv_P, \lambda\in \RR\}.
\end{equation}

\section{A key proposition}\label{S:prop}
It is easily checked that
 \begin{equation*}\label{Bad-c-equation}\Bad_{\epsilon}(\rr)=\RR^{d}\setminus\bigcup_{P\in\QQ^{d}}
\Delta_{\epsilon}(P), \end{equation*}
where for $P=(p_1/q,\ldots,p_d/q)\in \QQ^d$,
\begin{equation}\label{delta-c}
\Delta_{\epsilon}(P)=
\{\bx=(x_1,\dots,x_{d})\in\RR^{d}:|x_i-\frac{p_i}{q}|<\frac{\epsilon}{q^{1+r_i}}, i=1,2,\dots,d\}.
\end{equation}
Define a partial order $"<"$ on the set $\QQ^{d}$ by
\begin{equation*}
P<P' \Longleftrightarrow \Delta_{c}(P)\subsetneq \Delta_{c}(P'),
\end{equation*}
where the constant $c$ is given in \eqref{values}.

\begin{definition}\label{D:maximal}
A rational point $P$ is called \emph{maximal} if $P$ is a maximal element for the partial order $<$. Let
$\sV$ denote the set of maximal points.
\end{definition}

\begin{remark}
Note that the partial order defined as above depends on the constant $c$, which is determined by the triple $\Phi$. As $\Phi$ has already been fixed, we omit the dependence in the definition.
\end{remark}

The following lemma is easy but important.

\begin{lemma}\label{L:nouse}
$$\bigcup_{P\in\QQ^{d}}\Delta_{c}(P)=\bigcup_{P\in\sV}\Delta_{c}(P).$$
\end{lemma}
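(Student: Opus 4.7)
The inclusion $\bigcup_{P\in\sV}\Delta_{c}(P)\subseteq\bigcup_{P\in\QQ^{d}}\Delta_{c}(P)$ is immediate from $\sV\subseteq\QQ^{d}$, so the content lies in the reverse inclusion. My plan is as follows: given any $P\in\QQ^{d}$, I will show that the upper set
$$U_P=\{P'\in\QQ^{d}: P\le P'\}=\{P'\in\QQ^{d}: \Delta_c(P)\subseteq\Delta_c(P')\}$$
is finite, hence contains a maximal element $P^\ast$. A brief transitivity argument will confirm that $P^\ast\in\sV$, and by construction $\Delta_c(P)\subseteq\Delta_c(P^\ast)$, which is what is required.

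The key step is the finiteness of $U_P$, which I would establish by bounding both the denominators and the locations of its elements. Writing $P=\bp/q$ and $P'=\bp'/q'$, a coordinate-wise comparison of the box half-widths shows that $\Delta_c(P)\subseteq\Delta_c(P')$ forces $c/q^{1+r_i}\le c/q'^{1+r_i}$ for every $i$, and since $1+r_i\ge 1$ this yields $q(P')\le q(P)$. This uniformly bounds denominators of elements of $U_P$. On the other hand, the center $P$ of $\Delta_c(P)$ lies in $\Delta_c(P')$, so $\|P-P'\|_\infty<\max_i c/q'^{1+r_i}\le c$, placing every $P'\in U_P$ in a fixed bounded neighborhood of $P$. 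Since a bounded region contains only finitely many rationals of bounded denominator, $U_P$ is finite.

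I do not anticipate any substantial obstacle beyond this; the argument is essentially the standard observation that a partial order with finite up-sets admits maximal elements, so Zorn's lemma is avoided. The one subtle point worth verifying is that a maximal element $P^\ast$ of $U_P$ is in fact maximal in all of $\QQ^{d}$: if $P^\ast<P''$ for some $P''\in\QQ^d$, then transitivity gives $P\le P^\ast<P''$, so $P''\in U_P$, contradicting the maximality of $P^\ast$ inside $U_P$. Hence $P^\ast\in\sV$, completing the plan.
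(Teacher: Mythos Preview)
Your proof is correct and follows essentially the same approach as the paper: both arguments reduce to the finiteness of the (strict) upper set $\{P'\in\QQ^d:\Delta_c(P)\subseteq\Delta_c(P')\}$ and then extract a maximal element from it. The paper merely asserts this finiteness ``follows directly from the definition,'' whereas you spell out the denominator and location bounds and the transitivity check explicitly, but the underlying idea is identical.
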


\begin{proof}
We need to show that: for any $P$, there is a maximal element $Q$ such that
$P<Q$. Indeed it follows directly from the definition that the set
\begin{equation}\label{p-pp}
\{P'\in \QQ^d:P<P'\}
\end{equation}
is finite. Assume the contrary that there is no maximal element $Q$ such that
$P<Q$, then it follows that the set \eqref{p-pp} is infinite, which leads to a contradiction.

\end{proof}

For a closed ball $B\in\sB_{n}$, set
 $$\sV_{n+k,k}(B)=\{P\in\mB_{n+k,k}\cap\sV:  \Delta_c(P)\cap B \ne \emptyset\}.$$

The following is a key proposition needed in the proof of Lemma \ref{l-b-hpw}.
\begin{proposition}\label{P:main prop}
Let $n\ge 1$, $B\in\sB_n$ and $k\ge 1$. Then
$$\sV_{n+k,k}(B)\subset\bigcap_{P\in\sV_{n+k,k}(B)}\cH_P.$$
\end{proposition}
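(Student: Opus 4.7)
The plan is to argue by contradiction: pick any $P=\bp/q$ and $P'=\bp'/q'$ in $\sV_{n+k,k}(B)$, assume $F_P(P') \ne 0$, and derive incompatible lower and upper bounds on $|F_P(P')|$. The lower bound is the standard integrality argument: writing $F_P(P') = (\ba_P \cdot \bp' + q' C_P)/q'$ with integer numerator (since $\ba_P \in \ZZ^d$ and $C_P \in \ZZ$), nonvanishing forces $|F_P(P')| \ge 1/q'$. Since the statement is symmetric in $P,P'$, it suffices to prove $P'\in\cH_P$ for every such pair.

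For the upper bound, use $F_P(P)=0$ to write $|F_P(P')| = |\ba_P \cdot (P'-P)| \le \sum_i |a_i|\, |P_i - P_i'|$. Select $\by \in \Delta_c(P) \cap B$ and $\by' \in \Delta_c(P') \cap B$ (both nonempty by hypothesis) and chain the triangle inequality through $\by, \by'$ to obtain $|P_i - P_i'| \le c/q^{1+r_i} + 2\rho(B) + c/q'^{1+r_i}$. This splits the bound on $|F_P(P')|$ into three pieces $T_1+T_2+T_3$. Using $|a_i| \le q^{r_i}$ from \equ{bound-a} gives $T_1 \le dc/q$ and, after assuming WLOG $q \le q'$, $T_3 \le dc/q'$. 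The critical piece $T_2 = 2\rho(B)\sum_i |a_i|$ is controlled via $|a_i| \le \xi_P = H(P)/q < H_{n+k+1}/Q_{n+k,k}$, exploiting the structure of $\mB_{n+k,k}$.

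Multiplying each estimate by $q' < Q_{n+k,k+1}$ and substituting the identities $Q_{n+k,k+1}/Q_{n+k,k} = R^d$ and $H_{n+k+1} = (dc/\rho_0)R^{n+k+1}$, a direct computation gives $T_2 q' \le (\rho_0/4)\, R^{k + d + 1 - 18d^2}$, while $T_1 q' \lesssim dc\,R^{12d^2}$ and $T_3 q' \le dc$, both $\ll 1$ thanks to the factor $R^{-18d^2}$ built into $c$. Summing these bounds yields $|F_P(P')|\, q' < 1$, contradicting $|F_P(P')|\, q' \ge 1$. Hence $F_P(P')=0$ and $P' \in \cH_P$.

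The main obstacle is verifying that the $R$-exponent $k + d + 1 - 18d^2$ controlling $T_2 q'$ is negative across the entire admissible range of $(n, k)$. Being negative forces $k \lesssim 18d^2$, and the constraint built into Lemma \ref{L:decomposition}(1) (that $\mB_{n+k, k}$ is nonempty only when $n \gtrsim d^2 k$) is what aligns the parameters, aided by the padding constants $12d^2$ in the $Q_{n,k}$'s and $18d^2$ in $c$. For borderline values of $k$ one sharpens $T_2$ using the finer estimate $\psi_P \le R^{-dk-10d^2}$ of Lemma \ref{L:decomposition}(2) (available when $k \ge 2$); the line $\cL_P$ from Lemma \ref{L:v-existence}, which automatically sits inside $\cH_P$ because $\ba_P\cdot\bv_P=0$, provides additional leverage should the direct estimate need further refinement.
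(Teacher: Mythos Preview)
Your argument is essentially correct for $k=1$ (and matches the paper there), but it has a genuine gap for large $k$. The exponent $k+d+1-18d^2$ in your bound for $q'T_2$ is negative only when $k\lesssim 18d^2$; for larger $k$ your upper bound on $q'|F_P(P')|$ exceeds $1$ and the integrality contradiction disappears. You try to rescue this by invoking Lemma~\ref{L:decomposition}(1), but that constraint says only that $\mB_{m,k}\ne\emptyset$ forces $m$ (here $m=n+k$) to be large relative to $k$; it places \emph{no upper bound on $k$} once $n$ is allowed to grow. Likewise, feeding in $\psi_P\le R^{-dk-10d^2}$ from Lemma~\ref{L:decomposition}(2) does not help: writing $\xi_P=q^{s}\psi_P$ and bounding $q^{s}q'\le Q_{n+k,k+1}^{1+s}$ leaves an $R$-exponent of order $(1+sd)k$, which still diverges with $k$. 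The direct integrality bound $q'|F_P(P')|\ge 1$ is simply too weak when $k$ is large.

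The paper's proof for $k\ge 2$ is not a refinement of your estimate but a different argument that crucially uses the \emph{maximality} of $P_1$ in $\sV$. One takes the line $\cL_{P_1}$ and intersects it with $\cH_{P_2}$ (note: different subscripts --- your remark that $\cL_P\subset\cH_P$ is true but not the relevant relation). If they are not parallel, the intersection point $P_0$ is rational with denominator $q_0\le q_1|\ba_{P_2}\cdot\bv_{P_1}|$, and the careful choice of $\bv_{P_1}$ in Lemma~\ref{L:v-existence} forces $|\ba_{P_2}\cdot\bv_{P_1}|\le R^{-dk-6d^2}$, so $q_0\ll q_1$. A further estimate then shows $\Delta_c(P_1)\subsetneq\Delta_c(P_0)$, contradicting $P_1\in\sV$. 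The parallel case is handled by a separate (sharper) integrality argument using that $q_1v_{i,1}F_{P_2}(P_1)\in\ZZ$ for each $i$. None of this structure is present in your proposal; the single sentence at the end gesturing toward $\cL_P$ does not constitute a proof. (A minor point: your ``WLOG $q\le q'$'' is not legitimate, since $P'\in\cH_P$ is not symmetric in $P,P'$; the resulting loss is only a factor $R^{d}$ or $R^{12d^2}$ and is absorbable, but you should address it.)
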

To prove Proposition \ref{P:main prop}, it suffices to show that $P_1\in \cH_{P_2}$ for any $P_1, P_2 \in \sV_{n+k,k}(B)$. We pick
$$P_1=\frac{\bp_1}{q_1},P_2=\frac{\bp_2}{q_2}\in \sV_{n+k,k}(B), \text{ with } \bp_j=(p_{1,j},\ldots, p_{d,j})\text{ where } j=1,2, $$
and write $$\ba_{P_j}=(a_{1,j},\ldots,a_{d,j}) \text{ where } j=1,2. $$

Before proving Proposition \ref{P:main prop}, we give the following useful estimate.
\begin{lemma}\label{L:estimate}

For the function $F_{P}$ defined in Subsection \ref{SS:def-fp}, we have
\begin{eqnarray}\label{ine-dist}
\left|F_{P_2}(P_1)\right|\le
\begin{cases}
3d^2R^{12d^2+2}q_1^{-1}c, &\text{if } k=1 \\
3d^2R^{k+d+1}q_1^{-1}c,&\text{if } k\geq 2
\end{cases}
\end{eqnarray}
\end{lemma}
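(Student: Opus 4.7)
\textbf{Proof plan for Lemma \ref{L:estimate}.} Since $F_{P_2}$ is affine with $F_{P_2}(P_2)=0$ (the constant $C_{P_2}=-q_2^{-1}\ba_{P_2}\cdot\bp_2$ is precisely chosen to make this happen), the first step is to write
\[
F_{P_2}(P_1) \;=\; F_{P_2}(P_1)-F_{P_2}(P_2) \;=\; \ba_{P_2}\cdot(P_1-P_2) \;=\; \sum_{i=1}^{d} a_{i,2}\Bigl(\tfrac{p_{i,1}}{q_1}-\tfrac{p_{i,2}}{q_2}\Bigr),
\]
so that $|F_{P_2}(P_1)|\le\sum_i|a_{i,2}|\,|P_{1,i}-P_{2,i}|$ splits into a product estimate for each coordinate.

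The next step is to bound each $|P_{1,i}-P_{2,i}|$ using the defining property of $\sV_{n+k,k}(B)$: both $\Delta_c(P_1)\cap B$ and $\Delta_c(P_2)\cap B$ are nonempty, so picking $\bx$ in the former and $\by$ in the latter and applying the triangle inequality together with \equ{delta-c} yields
\[
|P_{1,i}-P_{2,i}|\le c\,q_1^{-1-r_i}+2\rho(B)+c\,q_2^{-1-r_i}.
\]
For the coefficients I will use \equ{bound-a} in two different ways: bound $|a_{i,2}|\le q_2^{r_i}$ when attacking the first and third terms (so that $|a_{i,2}|q_j^{-1-r_i}\le q_2^{r_i}q_j^{-1-r_i}$), and bound $|a_{i,2}|\le\xi_{P_2}$ when attacking the middle term (so summing over $i$ produces just a factor of $d\xi_{P_2}\rho(B)$).

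The third step compares $q_1$ and $q_2$ using the fact that both $P_1,P_2\in\mB_{n+k,k}$. For $k\ge 2$, both denominators lie in $[Q_{n+k,k},Q_{n+k,k+1})$, a dyadic-style interval of ratio $R^d$ by \equ{Q-nk}; consequently $q_2/q_1$ and $q_1/q_2$ both lie in $(R^{-d},R^d)$, which turns $\sum_i q_2^{r_i}q_1^{-1-r_i}$ into something at most $d R^d q_1^{-1}$ (using $r_i\le 1$), and similarly turns $dcq_2^{-1}$ into at most $dcR^d q_1^{-1}$. For $k=1$ the denominators lie in $[H_{n+1}^{1/(1+s)},H_{n+1}^{1/(1+s)}R^{12d^2})$, so the same manipulations go through with $R^d$ replaced by $R^{12d^2}$. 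For the middle term, I bound $\xi_{P_2}\rho(B)\le H(P_2)q_2^{-1}\cdot R^{-n}\rho_0\le H_{n+k+1}q_2^{-1}R^{-n}\rho_0=dc\,R^{k+1}q_2^{-1}$ by \equ{values} and \equ{def-b-n}; transporting $q_2^{-1}$ to $q_1^{-1}$ adds a further factor $R^d$ (respectively $R^{12d^2}$) for $k\ge 2$ (respectively $k=1$).

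Finally, I collect the three contributions: roughly $2dc R^d q_1^{-1}+2d^2 c R^{k+d+1}q_1^{-1}$ when $k\ge 2$, and roughly $2dc R^{12d^2}q_1^{-1}+2d^2 c R^{12d^2+2}q_1^{-1}$ when $k=1$. Since $R>2$ and $d\ge 2$, the first summand is absorbed into the second in each case, yielding the claimed constants $3d^2 R^{k+d+1}$ and $3d^2 R^{12d^2+2}$ respectively. No genuine obstacle appears; the only thing to watch is the bookkeeping of the exponents of $R$ and checking that the split $r_i\le 1$ (resp.\ $r_i\le s\le 1/(d-1)$) is used in the right place so that each bound survives the transition from $q_2$ to $q_1$.
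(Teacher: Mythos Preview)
Your proposal is correct and follows essentially the same approach as the paper's proof: expand $F_{P_2}(P_1)=\sum_i a_{i,2}(P_{1,i}-P_{2,i})$, triangulate each coordinate difference through points of $\Delta_c(P_j)\cap B$, bound $|a_{i,2}|$ by $q_2^{r_i}$ on the two $\Delta_c$-terms and by $\xi_{P_2}$ on the $\rho(B)$-term, and then convert all $q_2$'s to $q_1$'s using the ratio bound $R^d$ (for $k\ge2$) or $R^{12d^2}$ (for $k=1$). The bookkeeping and the final absorption of the smaller summand into $3d^2R^{\bullet}$ match the paper's computation line for line.
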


\begin{proof}
Since $P_1,P_2\in \mB_{n+k,k}(B)$, we can pick points
$$\bx=(x_1,\dots,x_{d})\in\Delta_{c}(P_1)\cap B\quad\text{ and }\quad\bx'=(x'_1,\dots,x'_{d})\in\Delta_{c}(P_2)\cap B.$$
Then,
\begin{eqnarray*}
&&\quad|F_{P_2}(P_1)|\\
&&=\Big|\sum_{1\leq i\leq d}a_{i,2}\frac{p_{i,1}}{q_1}-\sum_{1\leq i\leq d}a_{i,2}\frac{p_{i,2}}{q_2}\Big|\\
&&=\Big|\sum_{1\leq i\leq d}a_{i,2}\Big(\frac{p_{i,1}}{q_1}-x_i+x_i-x'_{i}+x'_{i}-\frac{p_{i,2}}{q_2}\Big)\Big|\\
&&\leq\sum_{1\leq i\leq d}|a_{i,2}|\Big(\frac{c}{q_1^{1+r_i}}+\frac{c}{q_2^{1+r_i}}+2R^{-n}\rho_0\Big)\\
&&\leq^{}\sum_{1\leq i\leq d} q_2^{r_i}\Big(\frac{c}{q_1^{1+r_i}}+\frac{c}{q_2^{1+r_i}}\Big)+2d\xi_{P_2}R^{-n}\rho_0 \qquad (\text{by } \ref{bound-a})\\
&&=\sum_{1\leq i\leq d}\frac{cq_2^{r_i}}{q_1^{1+r_i}}+\frac{dc}{q_2}+\frac{2dR^{-n}\rho_{0}H(P_2)}{q_2}\\
&&\leq\frac{2dc}{q_1}\max\{\frac{q_1}{q_2},1,\frac{q_2}{q_1}\}+\frac{2d^2R^{k+1}c}{q_1}\cdot\frac{q_1}{q_2}\\
&&\leq^{} \begin{cases}
(2d^2R^{k+1}+2d)R^{12d^2}q_1^{-1}c & \text{if } k=1,\\
(2d^2R^{k+1}+2d)R^{d}q_1^{-1}c &\text{if } k\ge 2
\end{cases} \qquad (\text{by definitions in Section \ref{SS:subdivision}})\\
&&\leq \begin{cases}
3d^2R^{12d^2+2}q_1^{-1}c &\text{if } k=1,\\
3d^2R^{k+d+1}q_1^{-1}c &\text{if } k\geq 2.
\end{cases}
\end{eqnarray*}
\end{proof}


\begin{proof}[Proof of Proposition \ref{P:main prop}]
 The proof is divided according to two cases.

In the case of $k=1$, by \eqref{values} and \eqref{ine-dist} we have
\begin{equation*}
q_1\Big|F_{P_2}(P_1)\Big|\le  3d^{2} R^{12d^2+2}c<1.
\end{equation*}
Since $q_1\big|F_{P_2}(P_1)\big|\in\ZZ$, then $F_{P_2}(P_1)=0$. Hence, $P_1$ lies on
$\cH_{P_2}$.

Now we assume that $k\ge 2$. Note that we have attached a rational line $\cL_{P}$
passing through $P$ to each rational vector $P$ with $k_{P}\ge 2$. Write the corresponding vector $\bv_{P_j} (j=1,2)$  as   $$\quad \bv_{P_j}=(v_{1,j},\ldots,v_{d,j}).$$
As $\bv_{P_j}\in \Lambda_{P_j}\setminus\{\bzero\}$, there are $b_j\in \ZZ, \bz_j=(z_{1,j},\ldots, z_{d,j})\in \ZZ^d$ such that
\begin{equation}\label{v-b-z}
\bv_{P_j}=b_j\frac{\bp_1}{q_1}+\bz_j, \text{ or equivalently } v_{i,j}=b_j\frac{p_{i,j}}{q_{j}}+z_{i,j} \text{ where } i=1,\ldots,d.
\end{equation}

We argue by contradiction. Suppose that $P_1$ does not lie on $\cH_{P_2}$, or equivalently,
\begin{equation}\label{assume-f-p}
F_{P_2}(P_1)\ne 0.
\end{equation} Then,
either $\cL_{P_1}$ is parallel to $\cH_{P_2}$, or $\cL_{P_1}$ intersects with $\cH_{P_2}$ at a point.

If $k\ge 2$ and $\cL_{P_1}$ is parallel to $\cH_{P_2}$, then
\begin{equation}\label{parallel-av}
\ba_{P_2}\cdot\bv_{P_1}=0.
\end{equation}
 We claim that
\begin{equation}\label{claim1}
q_1v_{i,1}F_{P_2}(P_1)\in \ZZ \quad \text{for each } 1\le i \le d.
\end{equation}
In view of \eqref{v-b-z} and the definition of $F_{P_2}$, to prove \eqref{claim1} it suffices to show
\begin{equation}\label{pr-claim1}
b_1 q_1^{-1}\ba_{P_2}\cdot\bp_1\in \ZZ.
\end{equation}
This follows easily from \eqref{parallel-av}.

As $\bv_{P_1}\ne \bzero$, it follows from \eqref{assume-f-p} and \eqref{claim1} that
\begin{equation}
q_1|F_{P_2}(P_1)|\Big(\sum_{1\le i\le d}|v_{i,1}|\Big)\ge 1.
\end{equation}
Note that, by definition we have
\begin{equation}\label{max-q-psi}
\max_{1\le i\le d}\{w_{i,1}q_1^{-r_i}\}=\max\{q_1^{-s}, \psi_{P_1}\} \le \psi_{P_1}.
\end{equation}
Hence we have
\begin{eqnarray*}
&&\quad q_1|F_{P_2}(P_1)|\Big(\sum_{1\le i\le d}|v_{i,1}|\Big)\\
&&\le^{} 3d^2R^{k+d+1}c \Big(\sum_{1\le i\le d}(d-1)w_{i,1}q_1^{-r_i}\Big) \qquad (\text{by }\eqref{ine-def-v} \text{ and }\eqref{ine-dist})\\
&&\le 3d^4R^{k+d+1}c\max_{1\le i\le d}\{w_{i,1}q_1^{-r_i}\} \\
&&\le^{} 3d^4R^{k+d+1}\psi_{P_1}c \qquad (\text{by }\eqref{max-q-psi})\\
&&\le^{} 3d^4R^{k+d+1-dk-10d^2}c \qquad (\text{by }\eqref{ine qxi})\\&&<1,
\end{eqnarray*}
which leads to a contraction.

If $k\ge 2$ and $\cL_{P_1}$ intersects with $\cH_{P_2}$, let $$P_0=\frac{\bp_0}{q_0}=(\frac{p_{1,0}}{q_0},\dots,\frac{p_{d,0}}{q_0})$$
be the point of their intersection. Write
\begin{equation}\label{p0-p1}
\frac{\bp_0}{q_0}=\frac{\bp_1}{q_1}+\lambda_0\bv_{P_1}.
\end{equation}
Now we are going to prove that
\begin{equation}\label{containment}
\Delta_c(P_1)\subsetneq \Delta_c(P_0),
\end{equation} by which we get a contradiction
from the assumption that $P_1\in \sV_{n+k,k}\subset  \sV$.
Applying the function $F_{P_2}$ to both sides of \eqref{p0-p1}, we get
\begin{equation}\label{def-lambda}
  \lambda_0=-\frac{F_{P_2}(P_1)}{\ba_{P_2}\cdot \bv_{P_1}}.
\end{equation}
Hence for $1\le i\le d$, we have
\begin{equation}\label{long-frac}
\frac{p_{i,0}}{q_0}=\frac{p_{i,1}}{q_1}-\frac{F_{P_2}(P_1)}{\ba_{P_2}\cdot \bv_{P_1}}v_{i,1}=
\frac{\sum_{1\leq j\leq d}a_{j,2}(p_{i,1}v_{j,1}-p_{j,1}v_{i,1})-q_1v_{i,1}C_{P_2}}{q_1\ba_{P_2}\cdot \bv_{P_1}}.
\end{equation}
Now we claim that both the denominator and numerator of the fraction on the right hand side of \eqref{long-frac} are integers.

Indeed, by \eqref{v-b-z}, we have
\begin{equation*}
p_{i,1}v_{j,1}-p_{j,1}v_{i,1}= p_{i,1}z_j-p_{j,1}z_i\in \ZZ.\end{equation*}
Both the terms $q_1v_{i,1}$ and $q_1\ba_{P_2}\cdot \bv_{P_1}$ are easily seen to be integers by  \eqref{v-b-z}. This completes the proof of our claim.
Then it follows that
\begin{equation}\label{ine-funda}q_0\le q_1|\ba_{P_2}\cdot \bv_{P_1}|.\end{equation}
We have \begin{eqnarray}\label{ine-av}
&&\quad |\ba_{P_2}\cdot \bv_{P_1}|\notag\\
&&=|\sum_{1\leq i\leq d} a_{i,2}v_{i,1}|\notag\\
&&\leq^{} \sum_{1\le i\le d}\min\left\{q_2^{r_i},\xi_{P_2}\right\}\cdot(d-1)w_{i,1}q_1^{-r_i} \qquad (\text{by } \eqref{ine-def-v})\notag\\
&&=\sum_{1\le i\le d}\min\{q_1^{-r_i}q_2^{r_i},q_1^{-r_i}\xi_{P_2}\}\cdot(d-1)w_{i,1}\notag\\
&&\le \sum_{1\le i\le d}d\min\{R^{dr_i}, R^{dr_i}q_2^{-1-r_i}H(P_2)\}w_{i,1} \quad (\text{by definitions in Section \ref{SS:subdivision}})\notag\\
&&\le \sum_{1\le i\le d}dR^{d}\min\{1, q_2^{-1-r_i}H(P_2)\}w_{i,1}\notag\\
&&\le d^2R^d\max\{\psi_{P_1}, \psi_{P_2} \} \qquad (\text{by definitions of $\psi_P$})\notag\\
&&\le^{} d^2R^{d-dk-10d^2} \qquad (\text{by } \eqref{ine qxi})\notag\\
&&<R^{-dk-6d^2}.\end{eqnarray}
In particular \begin{equation}\label{q0-q1}\frac{q_0}{q_1}\leq
|\ba_{P_2}\cdot \bv_{P_1}|<R^{-dk-6d^2}<R^{-1}<\frac{1}{2}.\end{equation}

For any $1\le i\le d$, we have \begin{eqnarray}\label{ine-key2}
&&\quad q_{0}^{1+r_i}\Big|\frac{p_{i,0}}{q_0}-\frac{p_{i,1}}{q_1}\Big|\notag\\
&&=q_{0}^{1+r_i}\Big|\frac{F_{P_2}(P_1)}{\ba_{P_2}\cdot \bv_{P_1}}v_{i,1}\Big| \qquad (\text{by }\eqref{p0-p1} \text{ and } \eqref{def-lambda})\notag\\
&&\leq q_{1}|F_{P_{2}}(P_1)|\cdot|\ba_{P_2}\cdot \bv_{P_1}|^{r_i} \cdot(d-1)w_{i,1} \qquad (\text{by } \eqref{ine-def-v} \text{ and } \eqref{ine-funda}) \notag\\
&&\leq d q_{1}|F_{P_{2}}(P_1)|\cdot\max\left\{|\ba_{P_2}\cdot \bv_{P_1}|^{s}, \psi_{P_1}\right\} \qquad (\text{by definitions of $\psi_P$}) \notag\\
&&\leq (3d^{2}R^{k+d+1}c)\cdot(R^{-dk-6d^{2}})^{\frac{1}{d}} \qquad (\text{by }\eqref{ine-dist} \text{ and } \eqref{ine-av})\notag\\
&&\leq 3d^3R^{1-5d}c\notag\\
&&\le \frac{c}{2} \qquad (\text{by } \eqref{R-large}).
\end{eqnarray}

 Now we are ready to prove claim \eqref{containment}. Indeed, for any
$\bx=(x_{1},\dots,x_{d})\in\Delta_{c}(P_1)$ and any $1\le i\le d$,
\begin{eqnarray*}
&&\quad|q_0^{1+r_i}(x_i-\frac{p_{i,0}}{q_0})|\\
&&\le |q_0^{1+r_i}(x_{i}-\frac{p_{i,1}}{q_1})|+|q_0^{1+r_i}(\frac{p_{i,1}}{q_1}-\frac{p_{i,0}}{q_0})|\\
&&<(\frac{q_0}{q_1})^{1+r_i}c+\frac{c}{2} \qquad (\text{by } \ref{ine-key2})\\
&&<^{} c \qquad (\text{by } \ref{q0-q1}).
\end{eqnarray*}

\end{proof}

\begin{corollary}\label{P:main2}
Let $n\ge1$, $B\in\sB_n$ and $k\ge 1$. There exists a hyperplane $E_k(B)\subset\RR^{d}$ such that for any
$P\in\sV_{n+k,k}$, $$\Delta_c(P)\cap B\subset E_k(B)^{(R^{-(n+k)}\rho_0)}.$$
\end{corollary}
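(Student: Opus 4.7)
The plan is to exhibit $E_k(B)$ as $\cH_{P_0}$ for a carefully chosen $P_0 \in \sV_{n+k,k}(B)$ and then deduce the inclusion by a direct distance estimate using Proposition \ref{P:main prop}. The decisive design choice is to pick $P_0$ with \emph{minimal} denominator $q_0 := q(P_0)$ among the elements of $\sV_{n+k,k}(B)$. (If $\sV_{n+k,k}(B)$ is empty the inclusion is vacuous and any hyperplane works.)

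With this choice the verification is short. For any $P = \bp/q \in \sV_{n+k,k}(B)$ and any $\bx \in \Delta_c(P) \cap B$, Proposition \ref{P:main prop} gives $P \in \cH_{P_0}$, hence $F_{P_0}(P) = 0$ and $F_{P_0}(\bx) = \ba_{P_0}\cdot(\bx - P)$. Combining the defining inequalities of $\Delta_c(P)$ with the componentwise bound $|(\ba_{P_0})_i| \le q_0^{r_i}$ from \eqref{bound-a} and the lower bound $\|\ba_{P_0}\| \ge \xi_{P_0} \ge H(P_0)/q_0 \ge H_{n+k}/q_0$ (using $P_0 \in \mB_{n+k}$), I would get
\[
\dist(\bx,\cH_{P_0}) \;=\; \frac{|F_{P_0}(\bx)|}{\|\ba_{P_0}\|} \;\le\; \frac{c}{H_{n+k}}\sum_{i=1}^d \left(\frac{q_0}{q}\right)^{1+r_i}.
\]
Minimality of $q_0$ forces each ratio $q_0/q$ to be at most $1$, so the sum is at most $d$ and the right-hand side reduces to $dc/H_{n+k} = \rho_0 R^{-(n+k)}$ by the definition \eqref{values} of $H_{n+k}$. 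This is precisely the desired inclusion.

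The main subtlety, and the reason the minimality of $q_0$ is indispensable, is that within a single $\mB_{n+k,k}$ the denominators can differ by a factor as large as $R^{d}$ (when $k \ge 2$) or $R^{12d^2}$ (when $k=1$), so an arbitrary choice of $P_0$ would blow the sum $\sum_i (q_0/q)^{1+r_i}$ up by a non-negligible power of $R$, swamping the factor $c/H_{n+k}$. All of the finer structure around the auxiliary lines $\cL_P$ has already been absorbed into Proposition \ref{P:main prop}, and is not needed again in this corollary.
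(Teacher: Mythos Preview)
Your argument is correct and coincides with the paper's own proof: the paper also chooses $\tilde P\in\sV_{n+k,k}(B)$ of minimal denominator, sets $E_k(B)=\cH_{\tilde P}$, uses Proposition~\ref{P:main prop} to get $P\in\cH_{\tilde P}$, and then performs the identical distance estimate (your sum $\sum_i (q_0/q)^{1+r_i}\le d$ is exactly the paper's bound $\sum_i c\,\tilde q^{\,r_i}/q^{1+r_i}\le dc/\tilde q$ after dividing by $\xi_{\tilde P}$). One tiny omission: for $P\in\sV_{n+k,k}\setminus\sV_{n+k,k}(B)$ you should note $\Delta_c(P)\cap B=\emptyset$, but that is immediate from the definition.
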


\begin{proof}
Choose $$\tP=\frac{\tpp}{\tq}=(\frac{\tp_1}{\tq},\dots,\frac{\tp_d}{\tq})\in\sV_{n+k,k}(B)$$ such that
$$\tq=q(\tP)=\min\{q(P): P\in\sV_{n+k,k}(B)\}.$$
Let $$\cH_{\tP}=\{\bx=(x_1,\dots,x_{d})\in\RR^{d}: F_{\tP}(\bx)= \sum_{1\leq i\leq d}\ta_ix_i+C_{\tP}=0\}$$
be the associated hyperplane. We show that $\cH_{\tP}$ is the hyperplane $E_k(B)$ we need, in other words
$$\Delta_c(P)\cap B\subset \cH_{\tP}^{(R^{-(n+k)}\rho_0)}.$$ For any $P\in\sV_{n+k,k}$,
if $P\notin\sV_{n+k,k}(B)$, this assertion is trivial. If
$$P=\frac{\bp}{q}=(\frac{p_1}{q},\dots,\frac{p_d}{q})\in\sV_{n+k,k}(B),$$ then $P\in \cH_{\tP}$ by Proposition
\ref{P:main prop}. For any $\bx=(x_1,\dots,x_{d})\in\Delta_c(P)\cap B$, its distance to the hyperplane
$\cH_{\tP}$ \begin{eqnarray*}
&&\quad \frac{1}{\sqrt{\sum_{1\leq i\leq d}\ta_i^{2}}}|F_{\tP}(\bx)|\\
&&\leq \frac{1}{\xi_{\tP}}\Big|\sum_{1\leq i\leq d} \ta_i(x_i-\frac{p_i}{q})\Big|\\
&&\leq\frac{1}{\xi_{\tP}}\sum_{1\leq i\leq d}\frac{|c\ta_i|}{q^{1+r_i}}\\
&&\leq \frac{1}{\xi_{\tP}}\sum_{1\leq i\leq d}\frac{c\tq^{r_i}}{q^{1+r_i}}\\
&&\leq \frac{dc}{\tq\xi_{\tP}}\\
&&\leq\frac{dc}{H_{n+k}}\\
&&\leq R^{-(n+k)}\rho_0.
\end{eqnarray*}
This finishes the proof.
\end{proof}

\section{Proof of Lemma \ref{l-b-hpw}}\label{S:main}
We are now ready to prove Lemma \ref{l-b-hpw}.
\begin{proof}[Proof of Lemma \ref{l-b-hpw}] It suffices to show that $\Bad_c(\rr)$ is $(B_0,\beta,\gamma)$-HPW. Denote the closed ball chosen by Bob at the $i$-th round as $B_i$ with radius $\rho_i$. By \cite[Remark 2.4]{AGK}, we may assume that $\rho_i\rightarrow 0$.
In view of Lemma \ref{L:decomposition}(1) and Lemma \ref{L:nouse}, we have
\begin{equation}\label{e:bad decomp}
 \Bad_c(\rr)= \RR^d\setminus \bigcup_{P\in\QQ^{d}}\Delta_{c}(P)=\RR^d\setminus \bigcup_{n=0}^{\infty}
\bigcup_{k=1}^{\infty}\bigcup_{P\in\sV_{n+k,k}}\Delta_{c}(P).
\end{equation}

Recall the definition of $\sB_n$ from \eqref{def-b-n}, as those $\sB_n$ are mutually disjoint, for each $i\ge 1$ there exists at most one $n\geq 1$ with $B_{i}\in \sB_n$.
According to the definition of $(\beta,\gamma)$-hyperplane potential game, we have $\rho_{i+1}\geq\beta \rho_{i}$.
Hence for each $n\geq 1$, there exists an $i\geq 1$ with $B_{i}\in\sB_n$. Let $i(n)$ denote the smallest $i$ with
$B_{i}\in\sB_n$. Then, the map $n\mapsto i(n)$ is an injective map from $\ZZ_{\geq 1}$ to $\mathbb{Z}_{\geq 1}$.
Let Alice play according to the following strategy: each time after Bob chooses a closed ball $B_i$, if $i=i(n)$
for some $n\geq 1$, then Alice chooses the family of hyperplane neighborhoods $$\{E_k(B_{i(n)})^{(R^{-(n+k)}\rho_0)}:
k\in\NN\}.$$ where $E_k(B_{i(n)})$ is the hyperplane given in Corollary \ref{P:main2}. Otherwise Alice makes
an arbitrary legal move. Since $B_{i(n)}\in\sB_n$, $\rho_{i(n)}>\beta R^{-n}\rho_0$. Then, \eqref{Rvalues} implies that
\begin{eqnarray*}&&\quad\sum_{k=1}^\infty(R^{-(n+k)}\rho_0)^\gamma\\&&=(R^{-n}\rho_0)^\gamma(R^\gamma-1)^{-1}
\\&&\leq (\frac{\rho_{i}}{\beta})^{\gamma}(\frac{\beta^2}{2})^{\gamma}\\&&<(\beta \rho_{i})^\gamma.\end{eqnarray*}
Hence \eqref{HPW ine} is satisfied, and Alice's move is legal. According to \eqref{e:bad decomp} and Corollary
\ref{P:main2}, we have \begin{eqnarray*}&&\quad \bigcap_{i=0}^\infty B_i\\
&&=\bigcap_{i=0}^{\infty}B_i\cap\big(\Bad(\rr)\cup(\bigcup_{n=1}^{\infty}
\bigcup_{k=1}^{\infty}\bigcup_{P\in\sV_{n+k,k}}\Delta_{c}(P)\big)\\
&&\subset\Bad(\rr)\cup\big(\bigcup_{n=1}^{\infty}\bigcup_{k=1}^{\infty}
\bigcup_{P\in\sV_{n+k,k}}\Delta_{c}(P)\cap B_{i(n)}\big)\\
&&=\Bad(\rr)\cup\big(\bigcup_{n=1}^{\infty}\bigcup_{k=1}^{\infty}
\bigcup_{P\in\sV_{n+k,k}(B_{i(n)})}\Delta_{c}(P)\cap B_{i(n)}\big)\\
&&\subset\Bad(\rr)\cup \big(\bigcup_{n=1}^{\infty}\bigcup_{k=1}^\infty E_k(B_{i(n)})^{(R^{-(n+k)}\rho_0)}\big).
\end{eqnarray*} Thus the unique point $\bx_\infty\in\bigcap_{i=0}^\infty B_i$ lies
in $$\Bad(\rr)\cup \big(\bigcup_{n=1}^{\infty}\bigcup_{k=1}^\infty
E_k(B_{i(n)})^{(R^{-(n+k)}\rho_0)}\big).$$
Hence, Alice wins.
\end{proof}





\end{document}